\CompileMatrices\SelectTips{cm}{12}
\theoremstyle{plain}
\newtheorem{Thm}{\sc Theorem}[section]
\newtheorem{Theorem}[Thm]{\sc Theorem}
\newtheorem{Corollary}[Thm]{\sc Corollary}
\newtheorem*{Corollary*}{\sc Corollary}
\newtheorem{Proposition}[Thm]{\sc Proposition}
\newtheorem*{Proposition*}{\sc Proposition}
\newtheorem{Lemma}[Thm]{\sc Lemma}
\theoremstyle{definition}
\newtheorem{Definition}[Thm]{Definition}
\theoremstyle{remark}
\newtheorem{Remark}[Thm]{Remark}
\newtheorem*{Example*}{Example}
\newtheorem*{Remark*}{Remark}
\newcommand{\et}{\mathop{\rm \acute{e}t}}
\newcommand{\id}{{\mathop{\rm id}}}
\newcommand{\ZZ}{{\mathbb Z}}
\newcommand{\FF}{{\mathbb F}}
\newcommand{\Spec}{\mathop{\rm Spec}}
\newcommand{\Diag}{\mathop{\rm Diag}}
\renewcommand{\O}{{\cal O}}
\newcommand{\E}{{\cal E}}
\newcommand{\N}{{\cal N}}
\newcommand{\cQ}{{\cal Q}}
\newcommand{\NN}{{\mathbb N}}
\newcommand{\GG}{{\mathbb G}}
\newcommand{\PP}{{\mathbb P}}
\newcommand{\C}{{\cal C}}
\newcommand{\ch}{\mathop{\rm ch}}
\newcommand{\td}{\mathop{\rm td}}
\newcommand{\ti}{\tilde}
\newcommand{\Hom}{{\mathop{{\rm Hom}}}}
\newcommand{\cHom}{{\mathop{{\cal H}om}}}
\newcommand{\End}{{\mathop{ End}}}
\newcommand{\Vect}{{\mathop{{\cal C}^{\rm nf}}}}
\newcommand{\num}{\mathop{\rm num}}
\newcommand{\mod}{\mathop{\rm mod}}
\newcommand{\GL}{\mathop{\rm GL}}
\newcommand{\Pic}{\mathop{\rm Pic}}
\newcommand{\rk}{\mathop{{rk}}}
\newcommand{\alb}{\mathop{\rm alb}}
\newcommand{\Alb}{\mathop{\rm Alb}}
\newcommand{\NS}{\mathop{\rm NS}}
\mathchardef\mhyp="2D
\begin{document}

\markboth{\rm A.\ Langer} {\rm On the S-fundamental group scheme
II}

\title{On the S-fundamental group scheme II
\footnote{2000 Mathematics Subject Classification codes: 14J60;
14F05; 14L15}}
\author{Adrian Langer}
\date{\today}

%%%
\maketitle
%%%

%%%%%%%%%%%%%%%%%%%%%%%%%%%%%%%%%%

{\sc Address:}\\
1. Institute of Mathematics, Warsaw University,
ul.\ Banacha 2, 02-097 Warszawa, Poland,\\
2. Institute of Mathematics, Polish Academy of Sciences,
ul.~\'Sniadeckich 8, 00-956 Warszawa, Poland.\\
e-mail: {\tt alan@mimuw.edu.pl}

\begin{abstract}
The S-fundamental group scheme is the group scheme corresponding
to the Tannaka category of numerically flat vector bundles. We use
determinant line bundles  to prove that the S-fundamental group of
a product of two complete varieties is a product of their
S-fundamental groups as conjectured by  V. Mehta and the author.
We also compute the abelian part of the S-fundamental group scheme
and the S-fundamental group scheme of an abelian variety or a
variety with trivial \'etale fundamental group.
\end{abstract}

\section*{Introduction}

Let $X$ be a complete, reduced and connected
scheme defined over an algebraically closed field $k$. A vector
bundle $E$ on $X$ is called \emph{numerically flat} if both $E$
and its dual $E^*$ are nef vector bundles.

The category $\Vect (X)$ of numerically flat vector bundles is a
$k$-linear abelian rigid tensor category. Fixing a closed point

$x\in X$ endows $\Vect (X)$ with a fiber functor $E\to E(x)$ which
makes $\Vect (X)$ into a neutral Tannaka category. Hence there is
an equivalence between $\Vect (X)$ and the category of
representations of some affine group scheme $\pi ^S_1(X,x)$ that
we call an \emph{S-fundamental group scheme} of $X$ with base
point $x$. The S-fundamental group scheme appeared in the curve case
in \cite{BPS} and then in general in \cite{La} and \cite{Me}.

The strategy of constructing a similar fundamental group
scheme using another Tannaka category of essentially finite vector bundles
goes back to Nori's influential paper \cite{No}. In this paper Nori defined a smaller group
scheme, called \emph{Nori's fundamental group scheme}, which is a pro-finite completion
of the S-fundamental group scheme. Therefore the S-fundamental group scheme plays with
respect to Nori's fundamental group scheme a similar role as the fundamental group scheme
for stratified sheaves with respect to the \'etale fundamental group scheme.

In \cite{La} we exploited another interpretation of the
S-fundamental group scheme in case of smooth projective varieties.
Namely, numerically flat vector bundles are precisely strongly
semistable torsion free sheaves with vanishing Chern classes.
Using it one can apply vanishing theorems to establish, e.g.,
Lefschetz type theorems for the S-fundamental group scheme (see
\cite[Theorems 10.2, 10.4 and 11.3]{La}).

This paper further exploits an observation that all the known
properties of Nori's fundamental group scheme should still be valid for
the more general S-fundamental group scheme. This allows in particular
to obtain interesting properties of numerically flat bundles.

As a main result of this paper we prove that the S-fundamental
group of a product of two complete varieties is a product of their
S-fundamental groups. This result was conjectured both in
\cite{La} and \cite[Remark 5.12]{Me}. It implies in particular the
corresponding result for Nori's fundamental group that was
conjectured by Nori in \cite{No} and proven by  Mehta and
Subramanian in \cite{MS}. Our proof of Nori's conjecture is
completely different from that in \cite{MS}. On the other hand, our
proof gives also the corresponding result for the \'etale
fundamental group scheme that was used in the proof of Mehta and
Subramanian.

As a corollary to our theorem we prove that the reduced scheme
underlying the torsion component of the identity of the Picard
scheme of a product of projective varieties is a product of the
corresponding torsion components of its factors (see Corollary
\ref{Pic-product}). The author does not know any other proof of
this fact.

The proof of the main theorem follows easily from the fact that
the push forward of a numerically flat sheaf on the product
$X\times Y$ to $X$ is also numerically flat (and in particular
locally free). To prove this result we proceed by induction. In
the curve case we employ some determinants of line bundles.

In the remaining part of the paper we compute the abelian part of
the S-fundamental group scheme (cf. \cite[Lemma 20 and Theorem
21]{dS} for a similar result for the fundamental group scheme for
stratified sheaves). This allows us to compute the S-fundamental
group scheme for abelian varieties (this can also be done using
an earlier result of Mehta and Nori in \cite{MN} for which we again
have a completely different proof).

The last part of the paper is based on \cite{EM1} and \cite{EM2}
and it contains computation of the S-fundamental group for
varieties with trivial \'etale fundamental group.

\medskip

\section{Preliminaries}

In this section we gather a few auxiliary results.

\subsection{Numerical equivalence}

Let $X$ be a complete, reduced and connected $d$-dimensional
scheme defined over an algebraically closed field $k$.
We say that a rank $r$  locally free sheaf $E$ on $X$ is \emph{numerically trivial} if
for every coherent sheaf $F$ on $X$ we have $\chi (X, F\otimes E)= r\chi (X, F)$.

Now assume that $X$ is smooth. Then we define the \emph{numerical
Grothendieck group} $K(X)_{\num}$ as the Grothendieck group (ring)
$K(X)$ of coherent sheaves modulo numerical equivalence, i.e.,
modulo the radical of the quadratic form given by the Euler
characteristic $(a,b)\mapsto \chi (a\cdot b)=\int _X \ch (a)\ch (b)\td
(X)$.

We say that a coherent sheaf  has \emph{numerically trivial Chern
classes} if there exists an integer $r$ such that the class
$[E]-r[\O_X]$ is zero in $K(X)_{\num}$. By the Riemann--Roch
theorem this is equivalent to the vanishing of numerical Chern
classes.

\medskip

\subsection{Nefness}

Let us recall that a locally free sheaf $E$ on a complete
$k$-scheme $X$ is called \emph{nef} if and only if $\O_ {\PP
(E)}(1)$ is nef on the projectivization $\PP (E)$ of $E$. A
locally free sheaf $E$ is nef if and only if for any  morphism $f:
C\to X$ from a smooth projective curve $C$ each quotient of $f^*E$
has a non-negative degree.

We say that $E$ is \emph{numerically flat} if both $E$ and $E^*$
are nef. A locally free sheaf $E$ is numerically flat if and only
if for any morphism $f: C\to X$ from a smooth projective curve $C$
the pull-back $f^*E$ is semistable of degree zero.

If $X$ is projective then any numerically flat sheaf is
numerically trivial and a line bundle is numerically flat if and only
if it is numerically trivial.

\medskip

\subsection{Picard schemes}

Let $X$ be an (integral) variety defined over an algebraically
closed field $k$. Let $\Pic X$ denote the Picard group scheme of
$X$. By $\Pic ^0 X$ we denote its connected component of the
identity and by $\Pic ^{\tau} X$ we denote the torsion component
of the identity. Note that all these group schemes can be
non-reduced.

If $X$ is projective then $\Pic ^{\tau} X$ is of finite type and
the torsion group $\Pic ^{\tau} X/ \Pic^0 X$ is finite. The scheme
$\Pic ^{\tau} X$ represents the functor of numerically trivial
line bundles. If $X$ is smooth and projective then $\Pic ^{\tau} X
$ is the fine moduli space of torsion free rank $1$ sheaves with
numerically trivial Chern classes (e.g., because by Theorem
\ref{num-nef} every torsion free rank $1$ sheaf with numerically
trivial Chern classes is in fact a line bundle).

\medskip

\subsection{Cohomology and base change}\label{cohomology_and_base-change}

Let $f:X\to Y$ be a proper morphism of noetherian schemes and let
$E$ be a $Y$-flat coherent $\O_X$-module. Then we say that
\emph{cohomology and base change commute for $E$ in degree $i$} if
for every base change diagram
$$\xymatrix{
& X\times_Y Y'\ar[d]^{g} {\ar[r]^{v}} & X\ar[d]^f\\
& Y'\ar[r]^{u}&  Y}$$ the natural map $u^*R^if_*E\to R^ig_*(v^*E)$
is an isomorphism.

If for every point $y\in Y$ the natural map
$R^if_*E\otimes k(y)\to H^i (X_y, E_y)$ is an isomorphism then
cohomology and base change commute for $E$ in degree $i$.
Indeed, our assumption implies that $u^*R^if_*E\to R^ig_*(v^*E)$ is
an isomorphism at every point of $Y'$.

\subsection{Boundedness of numerically flat sheaves}

The following theorem is a corollary of the general boundedness
result \cite[Theorem 4.4]{La1}.
\medskip

\begin{Theorem} \label{bound}
The set of numerically flat sheaves on a $d$-dimensional normal
projective variety $X$ is bounded.
\end{Theorem}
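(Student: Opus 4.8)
The plan is to derive the boundedness of numerically flat sheaves from the general boundedness theorem cited as \cite[Theorem 4.4]{La1}. That cited result gives boundedness for families of sheaves whose slopes and discriminants (or more generally, whose numerical invariants controlling the Harder--Narasimhan and slope data) are bounded. So the core of the proof is to check that numerically flat sheaves on a fixed $X$ automatically satisfy such numerical constraints uniformly. First I would recall that every numerically flat sheaf $E$ is locally free of some rank $r$, but a priori $r$ is unbounded; so the statement must be understood as boundedness of the whole set, which in the theory of bounded families permits varying rank provided one bounds the relevant normalized invariants. I would therefore reduce to bounding, for each fixed rank $r$, the family of numerically flat sheaves of that rank, and argue that the boundedness theorem applies uniformly.

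The key numerical input is that a numerically flat sheaf has \emph{numerically trivial Chern classes}: by the discussion in the Nefness subsection, on a projective $X$ any numerically flat sheaf is numerically trivial, and in particular its Chern classes vanish numerically. This means that the slope $\mu(E)=0$ and, more importantly, the discriminant $\Delta(E)=2rc_2-(r-1)c_1^2$ is numerically trivial as well. Moreover, numerical flatness implies that the restriction of $E$ to a general complete intersection curve is semistable of degree zero, and hence $E$ itself is semistable with respect to any polarization (its restriction to curves being degree-zero semistable forces slope semistability). Thus the family of numerically flat sheaves of rank $r$ consists of semistable sheaves of slope zero with vanishing discriminant, and these are exactly the numerical hypotheses required to invoke \cite[Theorem 4.4]{La1}.

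The remaining step is to pass from fixed rank to the whole set. The point is that the boundedness theorem \cite[Theorem 4.4]{La1} is formulated so that the bounding data depend only on the rank, the slope, and the discriminant (equivalently, on a bound for the normalized Hilbert polynomial determined by these invariants). Since for numerically flat sheaves the slope and the discriminant are \emph{fixed} (both numerically trivial) regardless of $r$, the Hilbert polynomial of a numerically flat sheaf of rank $r$ is forced to equal $r$ times the Hilbert polynomial of $\O_X$; this is precisely the content of numerical triviality. Hence for each $r$ the family has a single fixed Hilbert polynomial, the theorem yields a bounded family, and the whole set is a countable (indexed by $r$) union of bounded families. But in fact one expects more: a genuinely uniform bound, because the normalized invariants are independent of $r$, so the cited theorem applies once and for all to the entire collection.

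I expect the main obstacle to be the bookkeeping around varying rank and making precise in what sense the cited boundedness result covers families of unbounded rank. The cleanest route is to quote \cite[Theorem 4.4]{La1} in the form that bounds families with fixed slope and fixed (bounded) discriminant across all ranks, observe that numerical flatness forces $\mu(E)=0$ and $\Delta(E)$ numerically trivial uniformly in $r$, and then conclude directly. The only real verification is that numerically flat genuinely implies semistability of slope zero with the stated vanishing of discriminant — and this follows from the curve-restriction characterization of numerical flatness recalled in the Nefness subsection together with the fact recorded there that on projective $X$ numerically flat implies numerically trivial.
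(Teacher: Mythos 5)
Your overall strategy---pin down enough numerical invariants of a numerically flat sheaf and feed them into \cite[Theorem 4.4]{La1}---is indeed the paper's strategy, and the semistability part of your argument is fine (numerically flat sheaves are locally free, hence reflexive, and semistability of degree zero on curves gives slope semistability with $\mu=0$ for any polarization). The genuine gap is in the step where you claim the Chern-class data. The variety $X$ is only assumed \emph{normal}, and everything you invoke there---numerically trivial Chern classes, the discriminant $\Delta(E)=2rc_2-(r-1)c_1^2$, and Theorem \ref{num-nef} itself---is formulated in the paper only for \emph{smooth} projective varieties: the group $K(X)_{\num}$ is defined under a smoothness hypothesis, and Theorem \ref{num-nef} assumes $X$ smooth. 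On a singular normal variety you cannot simply assert ``$\Delta(E)$ is numerically trivial,'' and the recalled fact ``numerically flat implies numerically trivial'' cannot be used as an unconditional black box either: if it were available on normal $X$, the theorem would be a one-line consequence (taking $F=\O_X(m)$ fixes the entire Hilbert polynomial), and the paper's proof would be superfluous. Indeed the paper remarks right after the proof that in the \emph{smooth} case the theorem is an immediate corollary of Theorem \ref{num-nef} and \cite[Theorem 4.4]{La1}; your proposal is essentially that smooth-case argument transplanted to a setting where its ingredients are not defined.

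What the paper actually proves, and what is missing from your proposal, is control of the three leading Hilbert coefficients on a possibly singular $X$: writing $P(E)(m)=\sum_{i=0}^d \chi (E|_{{\bigcap}_{j\le i}H_j}) {{m+i-1}\choose i}$ for general hyperplane sections, one needs $\chi (E|_{{\bigcap}_{j\le i}H_j})=r\chi(\O_{{\bigcap}_{j\le i}H_j})$ for $i=d,d-1,d-2$. Points and curves are plain Riemann--Roch; the genuinely new case is $i=d-2$, where one must show $\chi(Y,F)=r\chi(Y,\O_Y)$ for a numerically flat $F$ on a normal projective \emph{surface} $Y$. The paper does this by taking a resolution $f:\ti Y\to Y$, applying Theorem \ref{num-nef} on the smooth surface $\ti Y$ to get $\chi(\ti Y,f^*F)=r\chi(\ti Y,\O_{\ti Y})$, and descending via the Leray spectral sequence and the projection formula, using that $R^1f_*\O_{\ti Y}$ is supported on finitely many points, so $\chi(Y,F\otimes R^1f_*\O_{\ti Y})=r\chi(Y,R^1f_*\O_{\ti Y})$. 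With $a_0,a_1,a_2$ thus fixed, \cite[Theorem 4.4]{La1}---quoted in the paper precisely in the form that reflexive semistable sheaves on a normal projective variety with fixed $a_0,a_1,a_2$ are bounded---closes the proof; note also that $a_0$ determines the rank, so your extended discussion of uniformity in $r$ is a red herring (a family of unbounded rank is never bounded in the usual sense, and the applications only ever need fixed rank). In short, the missing idea is exactly the resolution-of-singularities computation on surfaces; without it, your appeal to discriminants and Chern classes on a singular normal $X$ does not parse.
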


\begin{proof}
Let $E$ be a rank $r$ numerically flat sheaf on $X$. Let us fix a
very ample line bundle $\O_X(1)$ on $X$. Then we can write the
Hilbert polynomial of $E$ as
$$P(E)(m)=\chi (X, E(m))= \sum _{i=0}^d \chi (E|_{{\bigcap}_{j\le i}H_j}) {{m+i-1}\choose i},$$
where $H_1,...,H_d\in |\O_X(1)|$ are general hyperplane sections.
First let us note that $\chi (E|_{{\bigcap}_{j\le i}H_j})=r \chi
(\O_{{\bigcap}_{j\le i}H_j})$ for $i=d,d-1,d-2$. In cases $i=d$ or
$d-1$ the assertion is clear since $E|_{{\bigcap}_{j\le i}H_j}$ is
a numerically flat vector bundle on a set of points or on a smooth
curve, so the assertion follows from the usual Riemann--Roch
theorem.

In the case when $i=d-2$ it is sufficient to show that a rank $r$
numerically flat sheaf $F$ on a normal projective surface $Y$
satisfies equality $\chi (Y,F)=r\chi (Y, \O_Y)$. To prove this let
us take a resolution of singularities $f:\ti Y\to Y$. Then $\chi
(\ti Y,f^*F)=r\chi (\ti Y, \O_{\ti Y})$, because $f^*F$ is a
vector bundle with trivial Chern classes (as follows, e.g., from
Theorem \ref{num-nef}). But using the Leray spectral sequence
$H^j(\ti Y, R^if_*F)\Rightarrow H^{i+j}(Y, F)$ and the projection
formula we see that
$$\chi (\ti Y,f^*F)= \chi (Y, F)+\chi (Y, F\otimes R^1f_*\O_{\ti Y}).$$
Similarly, we have
$$\chi (\ti Y,\O_{\ti Y})= \chi (Y, \O_Y)+\chi (Y, R^1f_*\O_{\ti Y}).$$
Since $R^1f_*\O_{\ti Y}$ is supported on a finite number of points
and $F$ is locally free of rank $r$ we see that $\chi (Y, F\otimes
R^1f_*\O_{\ti Y})= r \chi (Y, R^1f_*\O_{\ti Y})$, which proves the
required equality.

Now let us write
$$P(E)(m)= \sum _{i=0}^d a_i {{m+d-i}\choose d-i}.$$
Then our assertion implies that $a_0(E), a_1(E)$ and $a_2(E)$
depend only on $X$. But the set of reflexive semistable sheaves
with fixed $a_0(E), a_1(E)$ and $a_2(E)$ on a normal projective
variety is bounded by \cite[Theorem 4.4]{La1}.
\end{proof}

\medskip
In case of smooth varieties the above theorem is an immediate
corollary of Theorem \ref{num-nef} and \cite[Theorem 4.4]{La1}.

\section{Fundamental groups in positive characteristic}

Let $X$ be a complete connected reduced scheme defined over an
algebraically closed field $k$.

\subsection{S-fundamental group scheme}

Let $\Vect (X)$ denote the full subcategory of the category of
coherent sheaves on $X$, which as objects contains all numerically
flat (in particular locally free) sheaves. Let us fix a $k$-point
$x\in X$. Then we can define the fiber functor $T_x: \Vect (X)\to
k\mhyp \mod$ by sending $E$ to its fiber $E(x)$. One can show that
$(\Vect (X), \otimes ,T_x, \O_X)$ is a neutral Tannaka category
(see \cite[Section 6]{La}). Therefore by \cite[Theorem 2.11]{DM}
the following definition makes sense:

\begin{Definition} \label{Def}
The affine $k$-group scheme  Tannaka dual to this neutral Tannaka
category is denoted by $\pi^S_1(X,x)$ and it is called the
\emph{S-fundamental group scheme} of $X$ with base point $x$.
\end{Definition}

This group scheme was first defined in the curve case by Biswas,
Parameswaran and Subramanian in \cite[Section 5]{BPS}, and then
independently in \cite{La} and \cite{Me}.

The following characterization of numerically flat bundles as
semistable sheaves with vanishing Chern classes appears in
\cite[Theorem 4.1 and Proposition 5.1]{La}.

\begin{Theorem} \label{num-nef}
Let $X$ be a smooth projective $k$-variety of dimension $d$. Let
$H$ be an ample divisor on $X$ and let $E$ be a coherent sheaf on
$X$. Then the following conditions are equivalent:
\begin{enumerate}
\item $E$ is a strongly $H$-semistable torsion free sheaf and its
Hilbert polynomial is the same as that of the trivial sheaf of the
same rank.
\item $E$ is a strongly $H$-semistable torsion free
sheaf and it has numerically trivial Chern classes.
\item $E$ is a strongly $H$-semistable reflexive sheaf with
$\ch _1(E)\cdot H^{d-1}=0$ and $\ch _2 (E)\cdot H^{d-2}=0$.
\item $E$ is locally free, nef and $c_1(E)H^{d-1}=0$.
\item $E$ is numerically flat.
\end{enumerate}
\end{Theorem}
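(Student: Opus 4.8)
The plan is to prove the five conditions equivalent as a single cycle, separating the implications that are formal consequences of Riemann--Roch and the curve-theoretic criteria recalled in \S1.2 from the one genuinely analytic implication. Concretely, I would establish $(1)\Leftrightarrow(2)$, $(2)\Rightarrow(3)$, $(3)\Rightarrow(5)$, $(5)\Rightarrow(1)$, and $(4)\Leftrightarrow(5)$, so that the only step requiring real work is $(3)\Rightarrow(5)$.

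First I would dispose of the soft implications. The equivalence $(1)\Leftrightarrow(2)$ is pure Riemann--Roch: since $\chi(X,E(m))=\int_X\ch(E)\ch(\O_X(m))\td(X)$ depends only on the class of $E$ in $K(X)_{\num}$, the Hilbert polynomial of $E$ agrees with that of $\O_X^{\oplus \rk E}$ exactly when $[E]-\rk E\cdot[\O_X]=0$ in $K(X)_{\num}$, i.e. when the Chern classes are numerically trivial. Then $(2)\Rightarrow(3)$ is immediate, the numbers $\ch_1(E)H^{d-1}$ and $\ch_2(E)H^{d-2}$ being components of the numerically trivial Chern character; reflexivity is recovered at the very end, the inclusion $E\subseteq E^{**}$ being an equality because both sheaves have the trivial Hilbert polynomial. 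For $(5)\Rightarrow(1)$ I would invoke the two facts from \S1.2: a numerically flat sheaf on a projective variety is numerically trivial, which gives the Hilbert-polynomial statement, while strong semistability holds because the Frobenius is a morphism and nefness is preserved under pull-back, so every Frobenius pull-back of $E$ is again numerically flat and hence semistable on every curve. Finally $(4)\Leftrightarrow(5)$: $(5)\Rightarrow(4)$ is the definition together with $c_1(E)H^{d-1}=0$ (nefness of $E$ and of $E^*$ force this intersection number to be both $\ge 0$ and $\le 0$), and for $(4)\Rightarrow(5)$ one notes that $\det E$ is a nef line bundle with $\det E\cdot H^{d-1}=0$, hence numerically trivial; then for any $f\colon C\to X$ the bundle $f^*E$ has degree $0$ and, being nef, has no quotient of negative degree, so it is semistable, and the same applied to $E^*$ gives that $E^*$ is nef too.

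The heart of the matter is $(3)\Rightarrow(5)$, where geometric fibrewise positivity must be manufactured out of two numerical vanishing conditions. Here I would combine the Bogomolov inequality with the Hodge index theorem. Writing $\Delta(E)=2\rk E\cdot c_2(E)-(\rk E-1)c_1(E)^2$, one checks $\Delta(E)=c_1(E)^2-2\rk E\cdot\ch_2(E)$, so the hypothesis $\ch_2(E)H^{d-2}=0$ gives $\Delta(E)H^{d-2}=c_1(E)^2H^{d-2}$. The Bogomolov inequality for strongly $H$-semistable sheaves (valid in every characteristic) yields $\Delta(E)H^{d-2}\ge 0$, while the Hodge index theorem on the surface cut out by a general $H^{d-2}$ forces $c_1(E)^2H^{d-2}\le 0$ because $c_1(E)H^{d-1}=0$. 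Hence $\Delta(E)H^{d-2}=c_1(E)^2H^{d-2}=0$, the equality case of the Hodge index theorem gives $c_1(E)\equiv 0$ numerically, and then $c_2(E)H^{d-2}=0$ as well.

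It remains to convert the vanishing of the discriminant into numerical flatness, and this is the step I expect to be the main obstacle. I would restrict to a general complete intersection surface $S$: by the Mehta--Ramanathan restriction theorem in its strong form, $E|_S$ is strongly semistable, it is locally free since $S$ is a smooth surface and $E$ is reflexive, and it satisfies $c_1(E|_S)^2=c_2(E|_S)=0$; such a bundle is numerically flat on $S$, and letting $S$ vary shows that the non-locally-free locus of $E$ is empty, so $E$ is locally free. The delicate point is to upgrade semistability on the generic complete intersection curve to semistability of $f^*E$ for \emph{every} $f\colon C\to X$, i.e. to control all curves simultaneously rather than the general member of a covering family. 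This is exactly where strong semistability together with $\Delta(E)H^{d-2}=0$ is indispensable: the vanishing of the discriminant, combined with the boundedness of numerically flat sheaves (Theorem \ref{bound}) and the semicontinuous behaviour of semistability in families, is what prevents a destabilizing quotient from appearing on a special curve. Granting this, the curve criterion of \S1.2 shows that $E$ is numerically flat, which closes the cycle.
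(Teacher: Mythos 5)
You should first be aware that this paper contains no proof of Theorem \ref{num-nef} at all: it is quoted verbatim from \cite[Theorem 4.1 and Proposition 5.1]{La}, so the benchmark is that external proof. Your outline follows the known strategy up to a point --- the combination of Langer's characteristic-$p$ Bogomolov inequality for strongly semistable sheaves with the Hodge index theorem, yielding $c_1(E)\equiv 0$ and $\Delta(E)\cdot H^{d-2}=0$, is correct, as are the soft implications $(2)\Rightarrow(3)$, $(5)\Rightarrow(1)$ and $(4)\Leftrightarrow(5)$. But at the decisive step you write ``Granting this''. Upgrading semistability on \emph{general} complete-intersection curves to semistability of $f^*E$ for \emph{every} $f\colon C\to X$ is the heart of the theorem, and the mechanism you gesture at is doubly flawed: within this paper, Theorem \ref{bound} is itself proved \emph{using} Theorem \ref{num-nef} (its surface step explicitly invokes it), so your appeal to it is circular; and in any case Theorem \ref{bound} concerns numerically flat sheaves, while at this stage neither $E$ nor its Frobenius pull-backs are known to be numerically flat --- that is precisely what is to be proved. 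The argument that actually works applies the boundedness theorem for semistable reflexive sheaves with bounded $a_0,a_1,a_2$ (\cite[Theorem 4.4]{La1}) to the family $\{(F_X^n)^*E\}_{n\in\NN}$ --- admissible because $\ch_1\cdot H^{d-1}=0$ and $\Delta\cdot H^{d-2}=0$ are merely rescaled by powers of $p$, hence stay zero, under Frobenius --- and then derives a contradiction from Frobenius scaling: if $f^*E$ were unstable, then $\mu_{\max}(F_C^{n*}(f^*E))\ge p^n\mu_{\max}(f^*E)$ would be unbounded, against the boundedness of the family. This is exactly the pattern the present paper displays in the proof of Lemma \ref{push-forward}; you name the right ingredients but never assemble them, and without this step there is no proof.

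There are two further genuine errors. First, your claim that $(1)\Leftrightarrow(2)$ is ``pure Riemann--Roch'' fails in the direction $(1)\Rightarrow(2)$: equality of Hilbert polynomials only asserts $\chi\bigl((E-r\O_X)\cdot\O_X(m)\bigr)=0$ for all $m$, which constrains the pairings of $\ch(E)$ against powers of $H$ (and even these are entangled with the Todd class --- the coefficient of $m^{d-2}$ gives $\ch_2(E)\cdot H^{d-2}=-\ch_1(E)\cdot\td_1(X)\cdot H^{d-2}$, not $\ch_2(E)\cdot H^{d-2}=0$), whereas numerical triviality of the Chern classes means $[E]-r[\O_X]$ annihilates \emph{all} of $K(X)$ under the Euler pairing. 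That a strongly semistable sheaf with trivial Hilbert polynomial has all Chern classes numerically trivial is one of the substantive conclusions of the theorem, available only after the hard implication; as written, your cycle $(1)\Rightarrow(2)\Rightarrow(3)\Rightarrow(5)$ breaks at its first link. Second, your derivation of local freeness is vacuous: a reflexive sheaf on a smooth variety is locally free outside a closed subset of codimension at least $3$, so a general complete-intersection surface $S$ \emph{misses} the singular locus entirely, and ``letting $S$ vary'' detects nothing; for $d\ge 3$ local freeness is again part of the content of \cite[Theorem 4.1]{La} and needs a separate argument rather than restriction to general surfaces.
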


\subsection{Nori's and \'etale fundamental group schemes}

Let us consider the category $\C ^N(X)$ of bundles which are
trivializable over a principal bundle under a finite group scheme.
For a $k$-point $x\in X$  we can define the fiber functor $T_x: \C
^N (X)\to k\mhyp \mod$ by sending $E$ to its fiber $E(x)$. This makes
$\C ^N (X)$ a neutral Tannaka category which is equivalent to the
category of representations of an affine group scheme $\pi
^N_1(X,x)$ called \emph{Nori's fundamental group scheme}.

If instead of $\C ^N (X)$ we consider the category $\C ^{\et}(X)$
of bundles which are trivializable over a principal bundle under a
finite \'etale group scheme then we get an \emph{\'etale
fundamental group} $\pi _1^ {\et} (X,x)$.

Note that both these group schemes can be recovered from $\pi
^S_1(X,x)$ as inverse limits of some directed systems (see, e.g.,
\cite[Section 6]{La}). In particular, any theorem proved for the
S-fundamental group scheme implies the corresponding theorems for
\'etale and Nori's fundamental group schemes.

\subsection{The unipotent part of the S-fundamental group scheme}
\label{unipotent}

The largest  unipotent quotient of the S-fundamental group scheme
of $X$ is called the \emph{unipotent part} of $\pi ^S(X,x)$ and
denoted by $\pi ^U(X,x)$. By the standard Tannakian considerations
we know that the category of finite dimensional
$k$-representations of $\pi ^U(X,x)$ is equivalent (as a neutral
Tannakian category) to the category $\N\Vect (X)$ defined as the
full subcategory of $\Vect (X)$ whose objects have a filtration
with all quotients isomorphic to $\O_X$.

Nori proves that in positive characteristic $\pi ^U(X,x)$ is a
pro-finite group scheme (see \cite[Chapter IV, Proposition 3]{No})
and hence $\pi ^U(X,x)$ is the unipotent part of $\pi ^N (X,x)$.
This is no longer true in the characteristic zero case. However,
in arbitrary characteristic we know that $\Hom (\pi ^U (X,x), \GG
_a)=H^1(X, \O_X)$. In particular, in characteristic zero the
abelian part of  $\pi^U(X,x)$ is equal to the group of the dual vector space
$H^1 (X, \O _X)^*$ (see \cite[Chapter IV, Proposition
2]{No}).

In positive characteristic, the abelian part of $\pi ^U(X,x)$ is
the inverse limit of Cartier duals of finite local group
subschemes of $\Pic X$ (cf. \cite[Chapter IV, Proposition 6]{No}).

\section{Numerically flat sheaves on products of curves}

In this section we keep the following notation. Let $X$ and $Y$ be
complete $k$-varieties. Then $p, q$ are the projections of
$X\times Y$ onto $X$ and $Y$, respectively. Let $E$ be a coherent
sheaf on $X\times Y$. For a point $y\in Y$ we set
$E_y=p_*(E\otimes \O_{X\times \{y\}})$. Similarly,
$E_x=q_*(E\otimes \O_{\{x\} \times Y})$ for a point $x\in X$.

Let us consider the following proposition in the special case when
$X$ and $Y$ are curves.

\begin{Proposition} \label{curves}
Let $X$ and $Y$ be smooth projective curves. Let $F$ be a
locally free sheaf on $X\times Y$, such that $F_{x_1}$ is semistable for
some $x_1\in X$. Assume that $F$ is numerically trivial. Then for
any closed points $y_1, y_2\in Y$ the corresponding locally free
sheaves $F_{y_1}$ and $F_{y_2}$ are isomorphic. Moreover, the bundles
$F_x$ are semistable and S-equivalent for all closed points $x\in
X$.
\end{Proposition}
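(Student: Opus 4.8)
The plan is to read everything off from the vanishing of the Chern classes of $F$, using one determinant-of-cohomology line bundle on $X$ to control the vertical fibres and an induction on the rank (whose base case is a computation with determinant line bundles) to control the horizontal ones.

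First I would record the numerical consequences of the hypothesis. As $F$ is locally free and numerically trivial on the smooth projective surface $X\times Y$, its class in the numerical Grothendieck group equals $r[\O_{X\times Y}]$, so $\ch(F)=(r,0,0)$; in particular $c_1(F)=0$ and $c_2(F)=0$. Thus $\det F$ is numerically trivial, and since the ``correspondence'' summand of $\Pic(X\times Y)$ (the $H^1(X)\otimes H^1(Y)$ part) lies in $\NS(X\times Y)$, the vanishing of $c_1(F)$ forces $\det F\cong p^*A\otimes q^*B$ with $A\in\Pic^{\tau}X$ and $B\in\Pic^{\tau}Y$. Restricting to the fibres gives $\det F_y\cong A$ for all $y$ and $\det F_x\cong B$ for all $x$; in particular all $F_y$ have degree $0$ on $X$, all $F_x$ have degree $0$ on $Y$, and the fibrewise determinants are already constant.

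For the ``moreover'' I would use a single determinant line bundle. Choose $N\in\Pic Y$ with $\deg N=g_Y-1$, so that $\chi(Y,F_x\otimes N)=0$ for every $x$. Then $Rp_*(F\otimes q^*N)$ is a perfect complex of Euler characteristic $0$, and $\Theta_N:=\det Rp_*(F\otimes q^*N)$ is a line bundle on $X$ equipped with a canonical section whose zero scheme is $\{x:H^0(Y,F_x\otimes N)\neq 0\}$. A Grothendieck--Riemann--Roch computation expresses $\deg_X\Theta_N$ as an integral of $\ch(F)\,\ch(q^*N)\,\td$ over the fibres, and since every Chern class of $F$ vanishes this degree is $0$. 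Because $F_{x_1}$ is semistable of slope $g_Y-1$ with $\chi=0$, for general $N$ its theta divisor avoids $x_1$, so the canonical section of $\Theta_N$ is not identically zero; but a nonzero section of a degree-$0$ line bundle on the curve $X$ vanishes nowhere. Hence $H^0(Y,F_x\otimes N)=0$ for every $x$, and by Faltings' cohomological criterion every $F_x$ is semistable of degree $0$. Finally the $F_x$ define a morphism $X\to\M_Y$ to the projective moduli space of semistable bundles of rank $r$ and determinant $B$ on $Y$; the ample theta bundle on $\M_Y$ pulls back to $\Theta_N$, which has degree $0$, so the morphism is constant and all $F_x$ are S-equivalent. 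The step I expect to be delicate here is the passage from generic to every fibre, and this is exactly what the degree-$0$ theta section buys.

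For the isomorphism $F_{y_1}\cong F_{y_2}$ --- note that this asks only for mutual isomorphism of the horizontal restrictions, not their semistability --- I would induct on $\rk F$. The rank-one case is the first paragraph, where $F_y\cong A$ for all $y$. For the inductive step I would use that the S-equivalence class of the $F_x$ is constant (proved above): the associated graded of the relative Jordan--H\"older filtration of $F$ along $p$ then consists of isotrivial families of fixed stable bundles $S_i$ on $Y$, each of the form $q^*S_i\otimes p^*L_i$ for a line bundle $L_i$ on $X$, so each graded piece restricts on a horizontal fibre $X\times\{y\}$ to a bundle independent of $y$. It remains to see that the extensions rebuilding $F$ from these pieces also restrict to $y$-independent data: writing the extension classes in the relevant $\Ext$-groups on $X\times Y$ and applying the K\"unneth formula together with cohomology and base change, only the component pulled back from $X$ survives restriction to a point of $Y$, which pins down $F_y$ up to isomorphism independently of $y$. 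This rigidity of the extension classes --- equivalently, the vanishing on restriction of the ``mixed'' K\"unneth components --- is the step I expect to be the main obstacle, and it is where the product structure and the vanishing of $c_2(F)$ are genuinely used.
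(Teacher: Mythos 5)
Your argument for the semistability of all the $F_x$ has a genuine gap at the step ``for general $N$ its theta divisor avoids $x_1$''. What you need is some line bundle $N\in \Pic ^{g_Y-1}Y$ with $H^0(Y, F_{x_1}\otimes N)=0$, i.e.\ that the theta divisor of the semistable bundle $F_{x_1}$ is a proper subset of $\Pic^{g_Y-1}Y$. This fails in general: by Raynaud's examples (Bull.\ Soc.\ Math.\ France 110 (1982), later extended by Popa and others) there exist stable bundles $G$ of degree $0$ on a curve with $H^0(Y,G\otimes N)\neq 0$ for \emph{every} $N\in\Pic^{g_Y-1}Y$; in positive characteristic, which is the main case of interest here, Frobenius push-forwards provide such examples. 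Any such $G$ occurs in the situation of the proposition, since $F=q^*G$ is numerically trivial on $X\times Y$ with $F_{x_1}=G$ semistable; for this $F$ your canonical section of $\Theta_N$ is identically zero for every choice of $N$, and the degree-zero argument never gets started. This is precisely why the paper invokes Faltings's theorem \cite[Theorem I.2]{Fa}, which supplies an auxiliary bundle $E$ of possibly \emph{large rank} with $H^*(Y,F_{x_1}\otimes E)=0$. With that $E$ in place of your $N$, the rest of your computation (the determinant line bundle has degree $0$ by Grothendieck--Riemann--Roch because $\ch(F)=r$ numerically, so a section non-vanishing at $x_1$ vanishes nowhere, and then \cite[Theorem 6.2]{Se} gives semistability of every $F_x$) coincides with the paper's; your moduli-theoretic proof of S-equivalence via constancy of $X\to M_Y$ is also exactly the argument the paper sketches and omits.

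The horizontal part $F_{y_1}\cong F_{y_2}$ has a second gap: your induction presupposes a global ``relative Jordan--H\"older filtration of $F$ along $p$'' with graded pieces of the form $q^*S_i\otimes p^*L_i$. Such a filtration need not exist: Jordan--H\"older filtrations are neither unique nor flat in families --- the multiplicity of a fixed stable factor $S_i$ in the socle of $F_x$ can jump with $x$, so the natural candidates such as $p_*\cHom(q^*S_i,F)$ are not locally free and do not produce a subsheaf of $F$ inducing the fibrewise filtrations. (Your K\"unneth observation that the mixed component of an extension class dies upon restriction to $X\times\{y\}$ is correct as far as it goes, but it only applies once the filtration is in hand, and it does not use $c_2(F)$ as you suggest.) The paper avoids filtrations entirely by Faltings's elementary-modification trick: fix a nontrivial extension $0\to E\to E'\to \O_{y_1}\to 0$ with $E$ the auxiliary bundle above, observe that the locus $U\subset \cQ\simeq\PP(E')$ of quotients $[\pi:E'\to\O_y]$ with $H^*(Y,F_{x_1}\otimes\ker\pi)=0$ is open and non-empty, rerun the degree-zero determinant argument to get $H^*(Y,F_x\otimes\ker\pi)=0$ for all $x$ and all $[\pi]\in U$, and push forward $0\to F\otimes q^*\ker\pi\to F\otimes q^*E'\to F\otimes q^*\O_y\to 0$ to conclude $F_y\simeq p_*(F\otimes q^*E')$, a bundle independent of $y$, for all $y$ in a non-empty open subset of $Y$; irreducibility of $Y$ then gives the isomorphism for all pairs of points. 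To repair your proposal you would need either to construct your filtration (which the jumping phenomenon obstructs) or to replace the inductive step by an argument of this kind.
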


\begin{proof}
Let us fix a point $x_1\in X$. By  Faltings's theorem
\cite[Theorem I.2]{Fa} (see also \cite[Remark 3.2 (b)]{Se}) there
exists a rank $r'$ vector bundle $E$ on $Y$ such that $H^*(Y,
 F _{x_1}\otimes E)=0$. Note that this condition implies that $E$
is semistable (see \cite[Theorem 6.2]{Se}). This bundle defines a
global section $\Theta _{E}$ of a line bundle $L = \det p_{!}(F
\otimes q^* E) ^{-1}$. Set-theoretically, the zero set of this
section is equal to $\{x\in X : H^*(Y, F _{x}\otimes E)\ne 0\}$.

But by the Grothendieck--Riemann--Roch theorem (see, e.g., \cite[Appendix A,
Theorem 5.3]{Ha}) for every $u\in
K(Y)$ we have
$$\ch (p_{!} (F \cdot q^*u))=p_* (\ch (F)\cdot q^* (\ch (u)\cdot \td (X)))
=r p_*(q^*(\ch (u) \cdot \td (X)))$$ and the only non-zero part in
the last term is in degree zero. Therefore $L$ has degree $0$.
Since $L$ has the section $\Theta _{E}$ non-vanishing at $x_1$ it
follows that $L$ is trivial and $H^*(Y,  F _{x}\otimes E)=0$ for
all $x\in X$. By \cite[Theorem 6.2]{Se} this implies that $F_x$ is
semistable for every $x\in X$. To prove that $F_x$ are
S-equivalent one can use determinant line bundles on the moduli
space of semistable vector bundles on $X$. Since we do not use
this fact in the following we omit the proof. An alternative proof
can be found in \cite[Lemma 4.2]{Se}.

The rest of the proof is similar to part of proof of \cite[Theorem
I.4]{Fa} (see also Step 5 in proof of \cite[Theorem 4.2]{He}). Let
us fix a point $y_1\in Y$ and take any non-trivial extension
$$0\to E\to E'\to \O_{y_1} \to  0.$$
The Quot-scheme $\cQ$ of rank $0$ and degree $1$ quotients of $E'$
is isomorphic to $\PP (E')$. Let us set  $E_{\pi}=\ker \pi$ for a
point $[\pi: E'\to \O_{y}]\in \cQ$. The set $U$ of points
$[\pi]\in \cQ$ such that $H^*(Y,  F _{x_1} \otimes E_{\pi})= 0$ is
non-empty and open.  The same arguments as before show that
$H^*(Y,  F _{x} \otimes E_{\pi} )=0$ for all $x\in X$ and $[\pi] \in
U$. Applying $p_*$ to the sequence
$$0\to F \otimes q^*E_{\pi} \to F\otimes q^*E'\mathop{\longrightarrow}^{\id_F\otimes \pi} F\otimes q^*\O_{y}\to 0$$
for $[\pi : E'\to \O_{y}] \in U$ we see that $p_* (F\otimes
q^*E')\simeq F_y$. Therefore $F_{y_1}\simeq F_y$ for points $y$ in
some non-empty open subset of $Y$. Since similar arguments apply
to any other point $y_2\in Y$ we see that $F_{y_1}$ and $F_{y_2}$
are isomorphic for all points $y_1, y_2\in Y$.
\end{proof}

The following corollary is analogous to \cite[Proposition 2.4]{Gi}
in case of stratified sheaves:

\begin{Corollary} \label{fibers-on-product}
Let $X$ be a normal complete variety and let $Y$ be a complete
variety. Let $E$ be a numerically flat sheaf on $X\times Y$. Then
for any closed points $y_1, y_2\in Y$ the corresponding locally
free sheaves $E_{y_1}$ and $E_{y_2}$ are isomorphic. In
particular, the sheaf $q_*E$ is locally free.
\end{Corollary}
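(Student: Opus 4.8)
The plan is to reduce to the situation of Proposition \ref{curves} by a spreading-out argument whose heart is a semicontinuity computation controlled by Theorem \ref{bound}. First I would reduce the base: since $Y$ is irreducible, $y_1$ and $y_2$ lie on an irreducible curve in $Y$, and normalizing it and pulling $E$ back along $\ti Y\to Y$ (which preserves numerical flatness, as pullbacks of nef bundles are nef) lets me assume $Y$ is a smooth projective curve. Reducing via a resolution and via Chow's lemma — and pushing the eventual isomorphism back down, using that the birational pushforward of $\pi^*E_{y_i}$ recovers the locally free $E_{y_i}$ — I may also assume $X$ is smooth projective of dimension $d$; the delicacy of doing this for a merely normal complete $X$ I postpone to the end. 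The key object is $\mathcal H=\cHom(p^*E_{y_1},E)=p^*E_{y_1}^*\otimes E$ on $X\times Y$: since $E_{y_1}$ is numerically flat on $X$ (a restriction of the numerically flat $E$), $\mathcal H$ is numerically flat, and $\mathcal H_y=\cHom(E_{y_1},E_y)$ for each $y$.

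Granting that $y\mapsto h^0(X,\mathcal H_y)$ is constant, the corollary follows quickly. Because $Y$ is reduced, the base-change criterion of Section \ref{cohomology_and_base-change} (Grauert) makes $q_*\mathcal H$ locally free and compatible with base change. The identity $\id\in\End(E_{y_1})=H^0(X,\mathcal H_{y_1})$ then lifts to a local section of $q_*\mathcal H$, i.e.\ to a homomorphism $\phi_y\colon E_{y_1}\to E_y$ for $y$ near $y_1$ with $\phi_{y_1}=\id$. The locus in $X\times Y$ where the associated map $p^*E_{y_1}\to E$ is an isomorphism is open and contains $X\times\{y_1\}$; since $q$ is proper its complement has closed image in $Y$ avoiding $y_1$, so $E_y\cong E_{y_1}$ for $y$ in a neighbourhood of $y_1$. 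The same argument around an arbitrary base point (it is symmetric in the chosen point) shows that the isomorphism class of $E_y$ is locally constant on $Y$, hence constant as $Y$ is connected, giving $E_{y_1}\cong E_{y_2}$. The final clause then drops out: all $E_y$ being isomorphic forces $h^0(X,E_y)$ to be constant, so $q_*E$ is locally free by the same criterion.

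It remains to prove the constancy of $h^0(X,\mathcal H_y)$, where I expect the real work to lie. Fix a general complete intersection curve $C=H_1\cap\dots\cap H_{d-1}\subset X$ cut out by sufficiently ample divisors. On one hand, restricting $E$ to $C\times Y$ and applying Proposition \ref{curves} (both factors are now smooth projective curves, $E|_{C\times Y}$ is numerically trivial, and $E_x|_Y$ is semistable) yields $E_y|_C\cong E_{y_1}|_C$ for every $y$; hence $\mathcal H_y|_C\cong\cHom(E_{y_1}|_C,E_{y_1}|_C)$ and $h^0(C,\mathcal H_y|_C)=\dim\End(E_{y_1}|_C)$ is manifestly independent of $y$. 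On the other hand I would compare $H^0(X,\mathcal H_y)$ with $H^0(C,\mathcal H_y|_C)$: injectivity of the restriction is immediate, since a global homomorphism $\O_X\to\mathcal H_y$ vanishing on the ample $C$ would create a positive-degree subsheaf of the slope-$0$ semistable $\mathcal H_y$; and surjectivity follows from the vanishing of $H^1$ of the twists $\mathcal H_y(-H_i)$ along the chain $X\supset X\cap H_1\supset\dots\supset C$. Combining the two gives $h^0(X,\mathcal H_y)=h^0(C,\mathcal H_y|_C)=\mathrm{const}$.

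The main obstacle is to make the comparison $H^0(X,\mathcal H_y)\cong H^0(C,\mathcal H_y|_C)$ hold for a \emph{single} curve $C$ uniformly in $y$: the vanishing of $H^1(\mathcal H_y(-H_i))$ needs the $H_i$ ample of sufficiently high degree, and a priori that threshold could drift with $y$. This is precisely the role of Theorem \ref{bound} — the sheaves $\{\mathcal H_y\}$ form a bounded family, so Serre-type vanishing is uniform and one fixed general $C$ of high multidegree serves all $y$ simultaneously. The two technical points I would still need to settle are the standing assumption that $X$ is only normal and complete rather than smooth projective: the passages to a resolution and to a projective model must be checked to preserve numerical flatness and to descend the final isomorphism, and the $H^1$-vanishing on the possibly singular intermediate sections $X\cap H_1\cap\dots$ should be formulated through the dualizing sheaf.
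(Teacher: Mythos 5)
Your argument is correct in outline but takes a genuinely different route from the paper's. The paper makes the same first reduction ($Y$ to a smooth projective curve via Chow's lemma), but then treats $X$ by induction on $\dim X$: it chooses one sufficiently ample divisor $D\subset X$ so that $H^1(\cHom(E_{y_1},E_{y_2})\otimes\O_X(-D))=0$ (using $\dim X\ge 2$), lifts the inductively given isomorphism $E_{y_1}|_D\simeq E_{y_2}|_D$ to a homomorphism $\varphi\colon E_{y_1}\to E_{y_2}$, and notes that $\varphi$, being injective along $D$ between torsion-free (numerically flat) sheaves of the same rank, is an isomorphism. Since only the single sheaf $\cHom(E_{y_1},E_{y_2})$ attached to the two fixed points is involved, no uniformity problem in $y$ ever arises, and Theorem \ref{bound} is not needed here at all (the paper saves it for Lemma \ref{push-forward}). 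Your single-shot restriction to one complete-intersection curve, with constancy of $h^0(X,\mathcal{H}_y)$ fed into Grauert to spread the identity section and deduce local constancy of the family $y\mapsto E_y$, is a heavier mechanism --- it genuinely requires Theorem \ref{bound} to make the vanishing of $H^1(\mathcal{H}_y(-H_i))$ uniform over the bounded family $\{\mathcal{H}_y\}$ --- but it is coherent, and your diagnosis that uniformity is the crux and boundedness resolves it is exactly right; it even yields slightly more along the way than the paper's pointwise comparison.

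The one step you must excise is the reduction to smooth $X$ ``via a resolution'': resolution of singularities is not available in positive characteristic in dimension $\ge 4$, and positive characteristic is precisely where this corollary is used (in the proof of Lemma \ref{push-forward}); an alteration would not substitute, since your descent of the isomorphism needs $f_*\O_{\ti X}=\O_X$, which holds for birational morphisms to a normal variety but not for alterations. Fortunately nothing in your core argument requires smoothness: run it directly on normal projective $X$. A general complete-intersection curve avoids the singular locus (which has codimension $\ge 2$) and is smooth; the intermediate sections $X\cap H_1\cap\dots$ are normal by Seidenberg's theorem; and the vanishing $H^1(F(-D))=0$ for $D$ sufficiently ample and $F$ locally free on a normal projective variety of dimension $\ge 2$ is the Enriques--Severi--Zariski lemma, in the form \cite[Chapter III, Corollary 7.8]{Ha} that the paper itself invokes in exactly this normal setting --- so no Serre duality or dualizing sheaves on singular sections are needed. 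The remaining passage from complete normal $X$ to projective normal $X$ is then Chow's lemma plus normalization, with descent via $f_*\O_{\ti X}=\O_X$ and the projection formula, as in the paper.
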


\begin{proof}
Using Chow's lemma it is easy to see that there exists an
irreducible curve on $Y$ containing both $y_1$ and $y_2$. Taking
its normalization we can  replace $Y$ by a smooth projective curve
and prove the assertion in this special case. So in the following
we assume that $Y$ is a smooth projective curve.

Now we prove the assertion assuming that $X$ is projective. The
proof is by induction on the dimension $d$ of $X$. For $d=1$ the
assertion follows from Proposition \ref{curves}. For $d\ge 2$ let
us fix a divisor $D$ on $X$ and consider the following exact
sequence
$$\Hom (E_{y_1}, E_{y_2})\mathop{\longrightarrow}^{\alpha}
\Hom (E_{y_1}|_D, E_{y_2}|_D)  \longrightarrow H^1(\cHom (E_{y_1},
E_{y_2})\otimes \O_X (-D)).$$ Taking a sufficiently ample divisor
$D$, we can assume that $D$ is smooth and $H^1(\cHom (E_{y_1},
E_{y_2})\otimes \O_X (-D))=0$ (here we use $d\ge 2$; see
\cite[Chapter III, Corollary 7.8]{Ha}). But then the map $\alpha$
is surjective and the isomorphism $E_{y_1}|_D \simeq E_{y_2}|_D$
(coming from the inductive assumption) can be lifted to a
homomorphism $\varphi: E_{y_1}\to E_{y_2}$. Since $\varphi$ is
injective at the points of $D$ and $E_{y_1}$ is torsion-free, it
follows that $\varphi$ is an injection. But then it must be an
isomorphism.

To prove the assertion in case when $X$ is non-projective we can
use Chow's lemma. Namely, there exists a normal projective variety
$\ti X$ and a birational morphism $f: \ti X\to X$. Let us set
$g=f\times \id _Y: \ti X\times Y\to X\times Y$. By the previous
part of the proof we know that $f^*(E_{y_1})=(g^*E)_{ y_1}\simeq
(g^*E)_{y_2}=f^*(E_{y_2})$. But by Zariski's main theorem we know
that $f_*\O_{\ti X}=\O_X$. Hence by the projection formula we have
$E_{y_1}\simeq f_*f^*(E_{y_1}) \simeq f_*f^*(E_{y_2}) \simeq
E_{y_2}$.

The last part of the corollary follows from Grauert's theorem (see
\cite[Chapter III, Corollary 12.9]{Ha}).
\end{proof}

\section{S-fundamental group scheme of a product}

The following result was conjectured both by the author in
\cite[Section 8]{La} and by V. Mehta in \cite[Remark 5.12]{Me}:

\begin{Theorem} \label{product}
Let $X$ and $Y$ be complete $k$-varieties. Let us fix $k$-points
$x_0\in X$ and $y_0\in Y$. Then the natural homomorphism
$$ \pi_1^S(X\times _k Y, (x_0,y_0)) \to \pi_1^S(X,x_0)\times _k\pi_1^S(Y,y_0)$$
is an isomorphism.
\end{Theorem}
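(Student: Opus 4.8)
The plan is to use the Tannakian formalism systematically. By the equivalence between $\pi_1^S$ and its representation category, proving that the natural homomorphism is an isomorphism reduces to verifying two conditions on the corresponding tensor functor $\Vect(X)\times\Vect(Y)\to\Vect(X\times Y)$ sending $(E,F)$ to $p^*E\otimes q^*F$. Following the standard criterion (as in \cite[Proposition 2.21]{DM}), I must show: first, that every numerically flat sheaf on $X\times Y$ is a subquotient of some $p^*E\otimes q^*F$ with $E,F$ numerically flat (this gives surjectivity of the map on fundamental groups together with faithful fullness); and second, that the functor restricted to each factor is fully faithful onto the appropriate subcategory, which controls injectivity. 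Thus the whole theorem rests on a structural understanding of numerically flat sheaves on the product.

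First I would reduce to the case where $X$ and $Y$ are both normal projective varieties, since the fiber functor and the category $\Vect$ behave well under such reductions and Chow's lemma lets us pass from complete to projective (exactly as in the proof of Corollary \ref{fibers-on-product}). The central technical input is the claim advertised in the introduction: the push-forward $q_*E$ of a numerically flat sheaf $E$ on $X\times Y$ is again numerically flat, and in particular locally free. Corollary \ref{fibers-on-product} already delivers local freeness of $q_*E$ together with the crucial fact that all the fibers $E_y$ are isomorphic to a single numerically flat bundle on $X$; this rigidity is what makes the product structure emerge. Given an arbitrary numerically flat $E$ on $X\times Y$, I would form the canonical evaluation map $p^*(q_*E)\to E$ (or the analogous adjunction using both projections) and analyze its kernel and cokernel, showing by induction on rank that $E$ is built from bundles of the form $p^*A\otimes q^*B$ through extensions and subquotients.

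The key step is establishing that numerically flat sheaves on the product are generated, under subquotients, by external tensor products $p^*A\otimes q^*B$ of numerically flat sheaves on the factors. To make this work I expect to combine the rigidity of fibers from Corollary \ref{fibers-on-product} with the boundedness result of Theorem \ref{bound} and the semistability characterization of Theorem \ref{num-nef}. The inductive mechanism is to peel off a subbundle of $E$ detected by restricting to a fiber $\{x\}\times Y$ or $X\times\{y\}$: since $E$ restricted to every such fiber is again numerically flat (pullback under the closed immersion), and since these restrictions are rigid in the family, one can propagate a sub-bundle across the whole product and split the problem. The faithful fullness needed on each factor follows because $\Hom$ computations on the product reduce, via the projection formula and $q_*\O_{X\times Y}=\O_X$ (using that $X$, $Y$ are complete connected varieties so global functions are constant), to $\Hom$ computations on the factors.

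The hard part will be the second step, namely controlling the external generation and in particular showing that the functor $(E,F)\mapsto p^*E\otimes q^*F$ induces a surjection with the right Tannakian properties. The subtlety is purely positive-characteristic in flavor: strong semistability (Frobenius-invariance of semistability) must be preserved under the operations of pushing forward along $q$ and pulling back along $p$, and one must verify that the resulting bundles genuinely have numerically trivial Chern classes rather than merely vanishing slope. I anticipate that the Chern class computation will again go through a Grothendieck--Riemann--Roch argument on the product, analogous to the degree computation giving $\deg L=0$ in the proof of Proposition \ref{curves}; the determinant-line-bundle technique is exactly what keeps the numerical invariants under control. Once external generation is secured, the Tannakian criterion of \cite[Proposition 2.21]{DM} closes the argument essentially formally.
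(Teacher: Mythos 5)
Your overall architecture does match the paper's: the splitting via the sections $X\times\{y_0\}$ and $\{x_0\}\times Y$ gives faithful flatness, \cite[Proposition 2.21 (b)]{DM} reduces the closed immersion to showing that every numerically flat sheaf on $X\times Y$ is a quotient (or subquotient) of an external product $p^*E_0\otimes q^*F_0$, and Chow's lemma handles the reduction from complete to normal projective. But there is a genuine gap at exactly the point you flag as ``the hard part'': you never prove Lemma \ref{push-forward}, i.e.\ that $p_*E$ is \emph{numerically flat} and not merely locally free. Corollary \ref{fibers-on-product} gives local freeness and the rigidity of fibers, but the mechanism you propose for the rest --- a Grothendieck--Riemann--Roch computation on the product with determinant line bundles, in the style of Proposition \ref{curves} --- cannot deliver numerical flatness. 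GRR controls the Chern character of the K-theoretic push-forward $p_{!}E=\sum (-1)^i R^ip_*E$, which mixes all the higher direct images (separating them out is Corollary \ref{push-forward2}, which in the paper is a \emph{consequence} of Theorem \ref{product}, so using it here would be circular); and even granting numerically trivial Chern classes for $p_*E$, Theorem \ref{num-nef} shows that one still needs \emph{strong semistability}, which no Riemann--Roch computation provides. Your phrase ``strong semistability must be preserved under pushing forward'' names the obstacle without supplying an argument for it.

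What the paper actually does at this point is a Frobenius-plus-boundedness argument that your sketch lacks entirely: since $h^0(Y,E_x)$ is constant in $x$ (Corollary \ref{fibers-on-product}), base change gives $(F_X^n)^*(p_*E)\simeq p_*((F^n_X\times \id_Y)^*E)$; all the sheaves $(F^n_X\times\id_Y)^*E$ are numerically flat, hence lie in a bounded family by Theorem \ref{bound}, and a Grauert-type argument shows their push-forwards form a bounded set; so the entire Frobenius orbit $\{(F_X^n)^*(p_*E)\}_{n}$ is bounded, and restricting to any curve $f\colon C\to X$ the bounded degrees and slopes of $\{(F_C^n)^*(f^*p_*E)\}$ force $f^*p_*E$ to be semistable of degree zero. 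This is the genuinely positive-characteristic idea; correspondingly, your proposal also leaves characteristic zero uncovered, which the paper disposes of separately via the topological fundamental group and the Lefschetz principle. Two smaller repairs: the bare adjunction counit $q^*q_*E\to E$ you propose can vanish identically (take $E=p^*M$ with $M$ a nontrivial numerically trivial line bundle on $X$), so one must twist first, setting $F=\cHom(q^*E_{x_0},E)$ and using the surjection $p^*p_*F\otimes q^*E_{x_0}\to E$, whose surjectivity on every fiber $\{x\}\times Y$ follows from the rigidity $E_x\simeq E_{x_0}$; and your kernel/cokernel induction on rank is then unnecessary --- the one-shot quotient presentation already satisfies the Tannakian criterion.
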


\begin{proof}
In characteristic zero the assertion follows from the
corresponding fact for topological fundamental groups of complex
varieties and the Lefschetz principle. More precisely, in the case
of complex varieties the assertion follows from the corresponding
isomorphism of topological fundamental groups by passing to a
pro-unitary completion. In general, the fact follows from the
Lefschetz principle if one notes that Theorem \ref{product}
follows from Lemma \ref{push-forward} and if we use the Lefschetz
principle for this special assertion (note that to apply Lefschetz
principle we need to reformulate Theorem \ref{product} as it
involves group schemes which are not of finite type over the
field).

So in the following we can assume that the characteristic of $k$ is positive
(but we need it only to prove the next lemma which is
the main ingredient in proof of Theorem \ref{product}).

\begin{Lemma} \label{push-forward}
Let us assume that $X$ and $Y$ are normal and projective. Let $E$
be a numerically flat sheaf on $X\times Y$. Then $p_*E$ is
numerically flat.
\end{Lemma}

\begin{proof}
By Corollary \ref{fibers-on-product} $p_*E$ is locally free. Let
us fix a point $x_0\in X$. Then the sheaf $G_n=p_* ((F^n_X\times
\id _Y)^* E)=(F^n_X)^* (p_*E)$ is locally free of rank
$a=h^0(Y,E_{x_0})$.

Now let us consider the set $A$ of all numerically flat sheaves on
$X\times Y$. This set is bounded by Theorem \ref{bound}, so there
exist a scheme $S$ of finite type over $k$ and an $S$-flat sheaf
$\E$ on $X\times Y\times S$ such that the set of restrictions
$\{\E _s \}_{s\in S}$ contains all the sheaves in the set $A$. Let
us consider a subscheme $S'\subset S$ defined by $S'=\{s\in S:
h^0(Y, (\E_s)_{x_0})\ge a\}$. By semicontinuity of cohomology,
$S'$ is a closed subscheme of $S$. Let us consider an open subset
$U\subset S'$ that corresponds to points $s\in S'$ where
$h^0(Y,(\E_s)_{x_0})= a$. We consider $U$ with the reduced scheme
structure. By abuse of notation, the restriction of $\E$ to $U$
will be again denoted by $\E$.

 We claim that the set $\{p_*(\E_s)\} _{s\in U}$ is a
bounded set of sheaves. To prove this let us consider the
following diagram
$$ \xymatrix{& Y\ar@(ur,ul)[rr]^{j_{(x,s)}}\ar[r]^{j_{x}}\ar[d]^{p_{x}}
& X\times Y\ar[r]^{\tilde j_s} \ar[d]^{p} &X \times Y\times U
\ar[d]^{\tilde p}\\
&\Spec k\ar@(dr,dl)[rr]^{i_{(x,s)}}\ar[r]^{i_{x}}&X\ar[r]^{\tilde
i_s} &X\times U\\ }$$ in which the vertical maps are canonical
projections and the horizontal maps are embeddings corresponding to
fixed points $x\in X$ and $s\in U$. Let us recall that $p_*(\tilde
j_s^* \E)$ is locally free by Corollary \ref{fibers-on-product}
and the definition of $U$.
Moreover, $\tilde p_* \E$ is locally free as $h^0(Y,(\E_s)_{x})=
h^0(Y,(\E_s)_{x_0})= a$ for every $x\in X$ by Corollary
\ref{fibers-on-product}. Therefore the above diagram induces the
commutative diagram
$$\xymatrix{
& i_x^*p_*(\tilde j_s^* \E) \ar[r] &(p_x)_*j_x^*(\tilde j_s^* \E)\\
& i_{(x,s)}^*(\tilde p_*\E) \ar[r]\ar[u]&
(p_x)_*j^*_{(x,s)}\E\ar[u]^{\simeq}}$$ in which the horizontal
maps are isomorphisms by Grauert's theorem (see \cite[Chapter III,
Corollary 12.9]{Ha}). Hence the remaining vertical map is also an
isomorphism. This shows that $(\tilde p_*\E)_s\simeq p_* (\E_s)$,
which gives the required claim.

By our assumptions there exists a sequence $(s_n)_{n\in \NN}$ of
points of $U$ such that $p_*(\E_{s_n})\simeq G_n$. Therefore the
set $\{G_n\} _{n\in \NN}$ is bounded.

Now let us take any morphism $f:C\to X$ from a smooth projective
curve $C$. Since $(F^n_C)^* (f^*p_*E)\simeq f^*(G_n)$ and the set
$\{f^*(G_n)\} _{n\in \NN}$ is bounded, we see that $f^*p_*E$ is
semistable of degree $0$. More precisely, semistability follows
from the fact that sequences of slopes of maximal destabilizing
subsheaves and minimal destabilizing quotients of Frobenius
pull-backs of $f^*p_*E$ are bounded. Similarly, the sheaf
$f^*p_*E$ has degree $0$ since its Frobenius pull-backs have
bounded degree. Therefore $p_*E$ is numerically flat.
\end{proof}

\medskip

Now we can go back to the proof of Theorem \ref{product}.

The first part of proof is the same as the analogous part of proof
of \cite[Chapter IV, Lemma 8]{No}. Namely, the homomorphism $
\pi_1^S(X\times _k Y, (x_0,y_0)) \to \pi_1^S(X,x_0)\times
_k\pi_1^S(Y,y_0)$ is induced by projections $p: X\times Y\to X$
and $q: X \times Y\to Y$. Let $i:X\to X\times Y$ be the embedding
onto $X\times \{y_0\}$ and let $j:Y\to X\times Y$ be the embedding
onto $\{x_0\} \times Y$. Since $p\, i=\id _X$ and $q\, i$ is
constant, the composition $ \pi_1^S(X,x_0)\to \pi_1^S(X\times _k
Y, (x_0,y_0)) \to \pi_1^S(X,x_0)\times _k\pi_1^S(Y,y_0)$ is an
embedding onto the first component. Similarly $q\, j=\id _Y$ is
the embedding onto the second component, so the homomorphism $
\pi_1^S(X\times _k Y, (x_0,y_0)) \to \pi_1^S(X,x_0)\times
_k\pi_1^S(Y,y_0)$ can be split and in particular it is faithfully
flat.

Hence we only need to prove that it is a closed immersion.

Let us first assume that $X$ and $Y$ are normal and projective.
Note that the sheaf $F=\cHom(q^*E_{x_0}, E)$ is numerically flat.
By our assumptions and Lemma \ref{push-forward} the sheaf $p_*F$
is numerically flat. The induced map
$$p^*p_* F \otimes q^* E_{x_0}\to E$$
is surjective as its restriction to $\{x_0\} \times Y$ corresponds
to the surjective map $\Hom (E_{x_0}, E_{x_0})\otimes E_{x_0}\to
E_{x_0}$. Now \cite[Proposition 2.21 (b)]{DM} implies that the
natural homomorphism $ \pi_1^S(X\times _k Y, (x_0,y_0)) \to
\pi_1^S(X,x_0)\times _k\pi_1^S(Y,y_0)$ is a closed immersion and
therefore it is an isomorphism.

\medskip

Now we need the following lemma:

\begin{Lemma} \label{push-forward3}
Let $X$ and $Y$ be complete $k$-varieties such that the natural map
$\pi_1^S(X\times _k Y, (x_0,y_0)) \to \pi_1^S(X,x_0)\times _k\pi_1^S(Y,y_0)$
is an isomorphism. Let $E$ be a numerically flat sheaf on $X\times Y$.
Then for every integer $i$ the sheaf $R^ip_*E$ is numerically flat and
cohomology and base change commute for $E$ in all degrees.
\end{Lemma}

\begin{proof}
Since $\pi_1^S(X\times _kY, (x_0,y_0))\simeq \pi_1^S(X,x_0)\times
_k\pi_1^S(Y,y_0)$, $E$ is a subsheaf of a sheaf $p^*E_0\otimes
q^*F_0$ for some numerically flat sheaves $E_0$ and $F_0$. This is
an easy fact from representation theory as every $G_1\times
G_2$-module is a submodule of the tensor product of $G_1$ and
$G_2$-modules. Alternatively, \cite[Proposition 2.21 (b)]{DM} implies that
every numerically flat sheaf on $X\times Y$ is a quotient of a sheaf of
the form $p^*E_0\otimes q^*F_0$, which implies the required statement by
taking the duals.

The quotient $(p^*E_0\otimes q^*F_0)/E$ is also numerically flat, so it is
a subsheaf of a sheaf of the form $p^*E_1\otimes q^*F_1$ for some numerically
flat sheaves $E_1$ and $F_1$. Inductively we can therefore construct the following
acyclic complex of sheaves on $X\times Y$:
$$
0\to E\to p^*E_0\otimes q^*F_0 \to  p^*E_1\otimes q^*F_1\to ...\to p^*E_i\otimes q^*F_i\to ... \leqno{(*)}
$$
Let us set $\C ^i=p^*E_i\otimes q^*F_i$. Note that
$R^ip_*\C^j\simeq E_j ^{\oplus h^i(Y,F_j)}$ is numerically flat and consider the
following spectral sequence
$$E^{ij}_1=R^ip_*\C^j \Longrightarrow R^{i+j}p_*\C^{\bullet}\simeq R^{i+j}p_*E.$$
Since the category of numerically flat sheaves on $X$ is abelian,
kernels and cokernels of objects from this category are also
numerically flat. This implies that the limit $R^{i+j}p_*E$ of the above spectral sequence
is also numerically flat.

Now note that the complex $(*)$ restricted to $\{x\} \times Y$ remains acyclic, as all the sheaves
in this complex are locally free. Therefore we have a commutative diagram
of spectral sequences
$$\xymatrix{
& R^ip_*\C^j\otimes k(x)
\ar[d]& \Longrightarrow &R^{i+j}p_*E \otimes k(x)
\ar[d]\\
& H^i (Y, \C ^j_x) &\Longrightarrow &  H^{i+j}(Y, E_x) .} $$
Since $\C ^i=p^*E_i\otimes q^*F_i$, the left vertical map in this diagram is an isomorphism. Therefore
the right vertical map is also an isomorphism. But this implies that
cohomology and base change commute for $E$ in all degrees (see \ref{cohomology_and_base-change}).
\end{proof}

\medskip

Now let us return to the proof of Theorem \ref{product}. Let us
first assume that $X$ is normal and projective. By Chow's lemma
there exists a projective variety $\ti Y$ and a birational
morphism $f: \ti Y\to Y$. Passing to the normalization, we can
assume that $\ti Y $ is normal. Consider the base change diagram
$$\xymatrix{
& X\times \tilde Y \ar[r]^g \ar[d]^{\tilde q}& X\times Y\ar[d]^q\\
& \tilde Y \ar[r]^f&  Y}.$$ Let us take two closed points $y_1,
y_2\in Y$. Let us choose closed points $\tilde y_1, \tilde y_2\in
\tilde Y$ mapping onto $y_1, y_2$, respectively. Then
$h^0((g^*E)_{\tilde y_1})=h^0((g^*E)_{\tilde y_2})$ and hence
$h^0(E_{y_1})=h^0( E_{y_2})$. By Grauert's theorem (and \ref{cohomology_and_base-change})
this implies that $q_*E$ is locally free and cohomology and base change commute for $E$
in all degrees. In particular, we have and $f^* (q_*E)\simeq {\ti q}_* (g^*E)$.
But ${\ti q}_* (g^*E)$ is numerically flat, so $q_*E$ is also numerically flat.

In this case the same proof as in the previous case shows that $ \pi_1^S(X\times
_k Y, (x_0,y_0)) \to \pi_1^S(X,x_0)\times _k\pi_1^S(Y,y_0)$ is an
isomorphism.

Now we can again apply Chow's lemma to prove that if $X$ and $Y$
are complete and $E$ is numerically flat on $X\times Y$ then
$p_*E$ is numerically flat. As in the  previous case this implies
that $ \pi_1^S(X\times _k Y, (x_0,y_0)) \to \pi_1^S(X,x_0)\times
_k\pi_1^S(Y,y_0)$ is a closed immersion.
\end{proof}

\medskip
\begin{Remark}
Most of the proof of Theorem \ref{product} works in an arbitrary
characteristic. But the proof of Lemma \ref{push-forward} uses
positive characteristic. The characteristic zero version of
Theorem \ref{product} would follow from the positive
characteristic case if one knew that the reduction of a semistable
complex bundle for some characteristic is strongly semistable
(this is a weak version of {Miyaoka's problem}; see \cite[Problem
5.4]{Miy}). This problem seems to be open even for numerically
flat bundles on a product of two curves of genera $\ge 2$.
\end{Remark}

\medskip

Applying \cite[Lemma 6.3]{La} as a corollary to Theorem
\ref{product} we obtain the following result of Mehta and
Subramanian (conjectured earlier by Nori in \cite{No}).

\begin{Corollary} {\emph{(see \cite[Theorem 2.3]{MS})}}
Let $X$ and $Y$ be complete $k$-varieties. Let us fix $k$-points
$x_0\in X$ and $y_0\in Y$. Then the natural homomorphism
$$ \pi_1^N(X\times _kY, (x_0,y_0)) \to \pi_1^N(X,x_0)\times _k\pi_1^N(Y,y_0)$$
is an isomorphism.
\end{Corollary}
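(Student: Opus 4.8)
The plan is to deduce the statement directly from Theorem \ref{product} by passing from the S-fundamental group scheme to its maximal pro-finite quotient. As recalled in Section 2.2 and made precise in \cite[Lemma 6.3]{La}, Nori's fundamental group scheme $\pi_1^N(X,x)$ is recovered from $\pi_1^S(X,x)$ as the inverse limit of all its finite quotient group schemes; that is, it is the maximal pro-finite quotient of $\pi_1^S(X,x)$, and this assignment is functorial. Since Theorem \ref{product} furnishes an isomorphism
$$\pi_1^S(X\times_k Y,(x_0,y_0)) \cong \pi_1^S(X,x_0)\times_k \pi_1^S(Y,y_0),$$
it suffices to show that forming the maximal pro-finite quotient commutes with finite products of affine $k$-group schemes.

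The heart of the matter is the following purely group-scheme-theoretic fact: if $G=G_1\times_k G_2$ is a product of affine $k$-group schemes, then every finite quotient of $G$ is itself a quotient of a product $Q_1\times_k Q_2$ of finite quotients of the two factors. Granting this, the products $Q_1\times_k Q_2$ of finite quotients of the factors are cofinal in the inverse system of all finite quotients of $G$, so the maximal pro-finite quotient of $G$ is the product of the maximal pro-finite quotients of $G_1$ and $G_2$. Applying this to the isomorphism above immediately yields
$$\pi_1^N(X\times_k Y,(x_0,y_0)) \cong \pi_1^N(X,x_0)\times_k \pi_1^N(Y,y_0),$$
and one checks that the resulting isomorphism is the natural homomorphism induced by the two projections $p$ and $q$, exactly as in the first part of the proof of Theorem \ref{product}.

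To establish the key fact I would argue as follows. Fix a finite quotient $\rho\colon G\to Q$ and let $Q_1,Q_2\subset Q$ be the scheme-theoretic images of the factors $G_1,G_2$ under $\rho$, via the inclusions $G_i\hookrightarrow G$. Because $G_1$ and $G_2$ are normal in $G$ and commute with one another elementwise, $Q_1$ and $Q_2$ are commuting normal finite subgroup schemes of $Q$; because $\rho$ is faithfully flat and $G_1,G_2$ generate $G$, they jointly generate $Q$. Hence the multiplication morphism $Q_1\times_k Q_2\to Q$ is faithfully flat, exhibiting $Q$ as a quotient of the product $Q_1\times_k Q_2$. Since each $Q_i$ is a finite quotient of $G_i$, the claim follows, and passing to the inverse limit over all such $Q$ completes the argument.

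The step demanding the most care is the last one, carried out over possibly non-reduced, non-smooth finite group schemes in positive characteristic: one must verify scheme-theoretically that $Q_1$ and $Q_2$ commute and are normal, and above all that the multiplication map $Q_1\times_k Q_2\to Q$ is faithfully flat rather than merely dominant. Here I expect the Tannakian dictionary to give the cleanest route, since the assertion translates into the elementary representation-theoretic statement that every finite-dimensional representation of $G_1\times_k G_2$ is built from external tensor products of representations of the two factors, the finitely many factors occurring in a given finite tensor subcategory automatically descending to finite quotients of $G_1$ and $G_2$. Everything else --- functoriality and the identification of the resulting map with the one induced by the projections --- is formal and parallels the corresponding part of the proof of Theorem \ref{product}.
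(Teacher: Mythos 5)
Your proposal is correct and follows essentially the same route as the paper: the paper's entire proof consists of applying \cite[Lemma 6.3]{La} (the compatibility of the passage from $\pi_1^S$ to its maximal pro-finite quotient $\pi_1^N$ with products) to Theorem \ref{product}, exactly the reduction you make. The only difference is that you supply a self-contained and correct Hopf-algebraic proof of the fact the paper outsources to that citation --- your worry about faithful flatness of $Q_1\times_k Q_2\to Q$ resolves cleanly, since the composite $G\to Q_1\times_k Q_2\to Q$ is the quotient $\rho$, so $k[Q]\to k[Q_1\times_k Q_2]$ is injective and injections of Hopf algebras over a field are automatically faithfully flat.
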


Note that Theorem \ref{product} and Lemma \ref{push-forward3} imply the following corollary.

\begin{Corollary} \label{push-forward2}
Let $X$ and $Y$ be complete $k$-varieties and let $E$ be a
numerically flat sheaf on $X\times Y$. Then for every integer $i$ the sheaf
$R^ip_*E$ is numerically flat and cohomology and base change commute for
$E$ in all degrees.
\end{Corollary}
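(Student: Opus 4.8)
The plan is to obtain this corollary directly from the two immediately preceding results, with no further geometric input required. Lemma~\ref{push-forward3} already delivers exactly the desired conclusion---that every $R^ip_*E$ is numerically flat and that cohomology and base change commute for $E$ in all degrees---but only under the standing hypothesis that the natural homomorphism $\pi_1^S(X\times_k Y,(x_0,y_0))\to \pi_1^S(X,x_0)\times_k\pi_1^S(Y,y_0)$ is an isomorphism. The whole proof therefore reduces to verifying that this hypothesis holds, and that is precisely the content of Theorem~\ref{product}, which is valid for every pair of complete $k$-varieties and every choice of base points.

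Concretely, I would fix the base points $x_0\in X$ and $y_0\in Y$, invoke Theorem~\ref{product} to conclude that the comparison map of S-fundamental group schemes is an isomorphism for the arbitrary complete $k$-varieties $X$ and $Y$, and then apply Lemma~\ref{push-forward3} with these data. Since the lemma produces both assertions simultaneously, this single application finishes the proof.

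The reason nothing more is needed is that all the substance already lives in the earlier arguments. The isomorphism of group schemes is exactly what lets Lemma~\ref{push-forward3} realise $E$ as a subsheaf of a sheaf of the form $p^*E_0\otimes q^*F_0$ (via \cite[Proposition 2.21 (b)]{DM} and elementary representation theory), build the acyclic resolution $(*)$ out of objects whose higher direct images are visibly numerically flat, and then run the resulting spectral sequence together with the fibrewise base-change comparison. Thus there is no genuine obstacle at this stage: the only thing to check is that the hypothesis of Lemma~\ref{push-forward3} is met for arbitrary complete $X$ and $Y$, and Theorem~\ref{product} supplies exactly this.
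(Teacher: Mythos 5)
Your proposal is correct and is exactly the paper's argument: the paper gives no separate proof but simply notes that Theorem~\ref{product} and Lemma~\ref{push-forward3} together imply the corollary, which is precisely the two-step application you describe.
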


\begin{Corollary} \label{Pic-product}
Let $X$ and $Y$ be projective $k$-varieties. Then
$$({\Pic} ^{\tau} (X\times Y))_{red}\simeq ({\Pic} ^{\tau} X)_{red}\times ({\Pic} ^{\tau} Y)_{red}$$
and
$$({\Pic} ^{0} (X\times Y))_{red}\simeq ({\Pic} ^{0} X)_{red}\times ({\Pic} ^{0} Y)_{red}.$$
\end{Corollary}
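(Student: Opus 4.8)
The plan is to deduce this statement about Picard schemes from the main product theorem via the known description of the abelian (more precisely, multiplicative) part of the S-fundamental group scheme. The key observation is that $(\Pic^\tau X)_{red}$ and $(\Pic^0 X)_{red}$ are controlled by characters of $\pi_1^S(X,x_0)$: a numerically trivial line bundle on a projective variety is exactly a rank-one numerically flat sheaf, hence a one-dimensional representation of $\pi_1^S(X,x_0)$. Thus the group of such line bundles, i.e.\ $\Pic^\tau X$ up to the appropriate scheme structure, is recovered as $\Hom(\pi_1^S(X,x_0),\GG_m)$, the character group of the S-fundamental group scheme. The reduction kills the local (infinitesimal) part, and $\GG_m$-valued characters see precisely the diagonalizable, hence multiplicative-type, reduced abelian quotient.

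First I would make this dictionary precise: for a projective $k$-variety $X$, identify $(\Pic^\tau X)_{red}$ functorially with the Cartier dual (or character scheme) of the maximal multiplicative-type quotient of $\pi_1^S(X,x_0)^{ab}$, and $(\Pic^0 X)_{red}$ with the part corresponding to the connected component. This should follow from the preliminary material: $\Pic^\tau X$ represents numerically trivial line bundles (Subsection on Picard schemes), and by Theorem \ref{num-nef} every such line bundle is numerically flat, hence an object of $\Vect(X)$ of rank one, i.e.\ a character. Conversely every character gives a rank-one numerically flat sheaf, which on a projective variety is numerically trivial. So $\Pic^\tau$ and the character group of $\pi_1^S$ agree as abelian groups, and one checks this is compatible with the group-scheme structure after reduction.

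Next I would apply Theorem \ref{product}. Since $\pi_1^S(X\times Y,(x_0,y_0))\simeq \pi_1^S(X,x_0)\times_k \pi_1^S(Y,y_0)$, passing to abelianizations and then to characters gives
$$
\Hom(\pi_1^S(X\times Y),\GG_m)\simeq \Hom(\pi_1^S(X,x_0),\GG_m)\times \Hom(\pi_1^S(Y,y_0),\GG_m),
$$
because a character of a product of group schemes is a product of characters on each factor (characters are one-dimensional, and a tensor product of nontrivial one-dimensional representations pulled back from the two factors is the general one-dimensional representation of the product). Translating through the dictionary of the previous paragraph yields the asserted isomorphism for $(\Pic^\tau)_{red}$. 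The statement for $(\Pic^0)_{red}$ then follows by restricting to the connected components, or equivalently by replacing $\GG_m$-characters with those factoring through the identity component; the splitting of the product is preserved since $(G_1\times G_2)^0 = G_1^0\times G_2^0$ for group schemes.

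The main obstacle I expect is making the identification between $(\Pic^\tau X)_{red}$ and the character scheme of $\pi_1^S(X,x_0)$ genuinely scheme-theoretic rather than merely an isomorphism of abstract groups of $k$-points, and verifying that reduction is exactly what reconciles the two sides. The character group $\Hom(\pi_1^S,\GG_m)$ naturally carries the structure of a diagonalizable (hence reduced) group scheme, whereas $\Pic^\tau X$ can be nonreduced; one must check that the reduced structures match and that the correspondence is functorial in $X$ so that it intertwines the projections $p,q$ with the two factor embeddings. Establishing this compatibility — essentially that the Tannakian/character-theoretic description of numerically trivial line bundles is an isomorphism of reduced group schemes and not just a bijection — is where the real work lies; once it is in place, the product decomposition is a formal consequence of Theorem \ref{product}.
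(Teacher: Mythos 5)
Your proposal matches the paper's proof: Langer likewise identifies $({\Pic}^{\tau}X)_{red}$ with the character group of $\pi_1^S(X,x)$ (a line bundle on a projective variety being numerically flat iff numerically trivial), deduces the first isomorphism directly from Theorem \ref{product} via characters of the product group scheme, and obtains the $({\Pic}^0)_{red}$ statement immediately from the first isomorphism. The scheme-theoretic compatibility you flag as the main obstacle is treated as routine in the paper (the natural comparison maps are morphisms of reduced group schemes over an algebraically closed field, so agreement on $k$-points suffices), so your route is essentially the same.
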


\begin{proof}
The category of representations of the S-fundamental group scheme
$\pi ^S_1(X,x)$ is equivalent to the category $\Vect (X)$ of
numerically flat sheaves. Since a line bundle is numerically flat
if and only if it is numerically trivial,  the group  $(\Pic
^{\tau} (X))_{red}$ is the group of characters of $\pi ^1 (X,x)$
(here we use projectivity of $X$). Now the first isomorphism
follows directly from Theorem \ref{product}.

The second isomorphism follows immediately from the first one
(if $X$ and $Y$ are smooth projective varieties one can also
give another proof using comparison of dimensions of
$\Pic ^0 (X\times Y)$ and $\Pic ^0X \times \Pic^0Y$).
\end{proof}

\section{The abelian part of the S-fundamental group scheme}

We say that a numerically flat sheaf is \emph{irreducible} if it
does not contain any proper numerically flat subsheaves (or,
equivalently, if it corresponds to an irreducible representation
of $\pi ^S_1(X,x)$). In case of projective varieties a numerically
flat sheaf is irreducible if and only if it is slope stable (with
respect to some fixed polarization; or, equivalently, with respect
to all polarizations).

If $E$ is an irreducible numerically flat sheaf then it is simple.
This follows from the fact that any endomorphism of such $E$ is
either $0$ or an isomorphism (otherwise the image would give a
proper numerically flat subsheaf).

\begin{Theorem} \label{irred-product}
Let $E$ be an irreducible numerically flat sheaf on a product
$X\times Y $ of complete varieties $X$ and $Y$. Then there exist
irreducible numerically flat sheaves $E_1$ on $X$ and $E_2$ on $Y$
such that $E\simeq p^*E_1\otimes q^*E_2$.
\end{Theorem}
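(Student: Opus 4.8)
The plan is to pass entirely to the Tannakian side and reduce the statement to a fact about representations of a product of affine group schemes. Write $G_1=\pi^S_1(X,x_0)$ and $G_2=\pi^S_1(Y,y_0)$. By Theorem \ref{product} the sheaf $E$ corresponds, under the equivalence $\Vect(X\times Y)\simeq \mathrm{Rep}(\pi^S_1(X\times Y,(x_0,y_0)))$, to a finite-dimensional representation $V$ of $G_1\times_k G_2$, and by the definition of irreducibility given above $E$ is irreducible exactly when $V$ is irreducible. Under this dictionary the projection $p$ induces the first projection $G_1\times_k G_2\to G_1$ of group schemes, so that $p^*E_1$ corresponds to the restriction to $G_1\times_k G_2$ of a $G_1$-representation, and likewise for $q$; since tensor product of sheaves matches tensor product of representations, an isomorphism $E\simeq p^*E_1\otimes q^*E_2$ is precisely an expression of $V$ as an external tensor product $V_1\boxtimes V_2$ of an irreducible $G_1$-representation $V_1$ and an irreducible $G_2$-representation $V_2$. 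Thus the theorem is equivalent to the assertion that every finite-dimensional irreducible representation of $G_1\times_k G_2$ over the algebraically closed field $k$ is an external tensor product of irreducible representations.

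To prove this representation-theoretic statement I would proceed as follows. View $V$ as a representation of $G_1$ via the embedding $G_1=G_1\times\{e\}\hookrightarrow G_1\times_k G_2$, and choose a nonzero irreducible $G_1$-subrepresentation $V_1\subseteq V$, which exists because $V$ is finite-dimensional. Form the multiplicity space $M=\Hom_{G_1}(V_1,V)$. Since the comodule structures induced by $G_1$ and by $G_2$ on $V$ commute, the $G_2$-coaction restricts to a coaction on $M$, making $M$ a finite-dimensional $G_2$-representation, and the evaluation map $V_1\otimes_k M\to V$, $v\otimes\varphi\mapsto\varphi(v)$, is a homomorphism of $G_1\times_k G_2$-representations. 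As $k$ is algebraically closed, Schur's lemma gives $\End_{G_1}(V_1)=k$, whence the evaluation map is injective and identifies $V_1\otimes_k M$ with the $V_1$-isotypic part of the $G_1$-socle of $V$. This image is nonzero and stable under both $G_1$ and $G_2$, hence under $G_1\times_k G_2$; by irreducibility of $V$ it must be all of $V$, so $V\simeq V_1\otimes_k M$ as $G_1\times_k G_2$-representations. Finally $M$ is forced to be irreducible as a $G_2$-representation, since any proper nonzero $G_2$-subspace $M'\subsetneq M$ would produce a proper nonzero $G_1\times_k G_2$-subrepresentation $V_1\otimes_k M'\subsetneq V$, contradicting irreducibility. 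Setting $V_2=M$ yields $V\simeq V_1\boxtimes V_2$.

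Translating back through the equivalences of the first paragraph produces irreducible numerically flat sheaves $E_1$ on $X$ and $E_2$ on $Y$ corresponding to $V_1$ and $V_2$, together with the desired isomorphism $E\simeq p^*E_1\otimes q^*E_2$. The main point, and the only step that is more than bookkeeping, is the injectivity and $G_1\times_k G_2$-stability of the image of the evaluation map: this is where algebraic closedness of $k$ enters, through Schur's lemma, and where one must be slightly careful, since $V|_{G_1}$ need not be semisimple, so one works with the sum of the simple $G_1$-submodules isomorphic to $V_1$ (the $V_1$-isotypic part of the socle) rather than with any isotypic decomposition of the whole of $V$. Everything else is the standard Tannakian translation, for which Theorem \ref{product} supplies the essential input.
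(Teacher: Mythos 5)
Your proof is correct, but it runs along a genuinely different track from the paper's. You reduce everything to pure representation theory: via Theorem \ref{product} you identify $E$ with a representation $V$ of $G_1\times_k G_2$, and then prove the standard comodule fact that an irreducible finite-dimensional representation of a product of affine $k$-group schemes over an algebraically closed field is an external tensor product $V_1\boxtimes V_2$, via the multiplicity space $M=\Hom_{G_1}(V_1,V)$ and the evaluation map $V_1\otimes_k M\to V$ (your care about the $V_1$-isotypic part of the socle, rather than a full isotypic decomposition of $V|_{G_1}$, is exactly the right precaution, and Schur's lemma does apply since $\End_{G_1}(V_1)$ is a finite-dimensional division algebra over the algebraically closed $k$). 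The paper instead argues on the sheaf side: it fixes an irreducible numerically flat subsheaf $K\subset E_{x_0}$, sets $F=\cHom(p^*K,E)$, invokes Corollary \ref{push-forward2} to know that $q_*F$ is numerically flat, and shows the evaluation map $q^*q_*F\otimes p^*K\to E$ is injective on $\{x_0\}\times Y$ and hence, by numerical flatness of the source and irreducibility of $E$, an isomorphism. Note that the two arguments are structurally the same evaluation-map argument run in two different categories: $q_*\cHom(p^*K,E)$ is precisely the geometric avatar of your multiplicity space $M$. What each buys: your version makes transparent that the theorem is a formal consequence of Theorem \ref{product} plus general Tannakian algebra, with no further geometric input; the paper's version keeps sheaf-level information in hand (it exhibits $E_2$ concretely as a push-forward), and its technique admits the variant recorded in the lemma immediately following the theorem, where a closely related statement is proved \emph{without} Theorem \ref{product} at all --- a flexibility your approach, which is irrevocably tied to the product theorem, does not have. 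Both routes ultimately depend on Theorem \ref{product} here (the paper through Corollary \ref{push-forward2}, you directly), so the logical cost is comparable.
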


\begin{proof}
Let us fix a point $x_0\in X$ and an irreducible numerically flat
subsheaf $K\subset E_{x_0}$. Let us set $F=\cHom(p^*K, E)$. Since
the sheaf $F$ is numerically flat, by Corollary
\ref{push-forward2} we know that $q_* F$ is numerically flat. It
is easy to see that the induced map
$$\varphi: q^*q_* F \otimes p^* K\to E $$
is injective on $\{ x_0\}\times Y$. Since $(q_*F)\otimes
k(x_0)=\Hom (K, E_{x_0})$ is non-zero, $q^*q_* F \otimes p^* K$ is
numerically flat and $E$ is irreducible, $\varphi$ is an
isomorphism.
\end{proof}

\medskip

The following lemma follows from the proof of Theorem
\ref{irred-product} but we give a slightly different proof without
using Theorem \ref{product} (so in particular the proof is
completely algebraic) in a case sufficient for the applications in the
next section.

\begin{Lemma}
Let  $X$ and $Y$ be  complete varieties.  Let $E$ be a numerically
flat sheaf on $X\times _k Y$. Assume that for some point $y_0\in
Y$, $E_{y_0}$ is simple (i.e., $\End (E_{y_0})=k$). Then there
exists a numerically trivial line bundle $L$ on $Y$ such that
$E\simeq p^*E_{y_0}\otimes q^*L$.
\end{Lemma}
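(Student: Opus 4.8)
The plan is to mirror the proof of Theorem \ref{irred-product}, but to exploit the simplicity of $E_{y_0}$ so that the relevant pushforward becomes a \emph{line} bundle, and to replace the appeal to Corollary \ref{push-forward2} (which rests on Theorem \ref{product}) by the elementary observation that a morphism of vector bundles which is an isomorphism on every slice $X\times\{y\}$ is a global isomorphism. This is what keeps the argument completely algebraic.

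First I would record that $E_{y_0}$, being the restriction of the numerically flat sheaf $E$ along the closed immersion $X\cong X\times\{y_0\}\hookrightarrow X\times Y$, is itself numerically flat on $X$; hence $F:=\cHom(p^*E_{y_0},E)=p^*(E_{y_0}^*)\otimes E$ is numerically flat on $X\times Y$. By Corollary \ref{fibers-on-product} applied to $E$ all the slices $E_y$ are isomorphic to $E_{y_0}$, and applied to $F$ it gives that $q_*F$ is locally free. Its rank equals $h^0(X,F_y)=\dim\Hom(E_{y_0},E_y)=\dim\End(E_{y_0})=1$, the last equality being simplicity of $E_{y_0}$. Thus $L:=q_*F$ is a line bundle on $Y$.

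Next I would form the evaluation map $\varphi\colon q^*L\otimes p^*E_{y_0}\to E$, namely the counit $q^*q_*F\to F$ composed with the natural pairing $F\otimes p^*E_{y_0}\to E$, and check it slice by slice. On $X\times\{y\}$ the source restricts to $L(y)\otimes E_{y_0}$ and, via base change (here $h^0$ is constant, so Grauert applies), $\varphi_y$ is the evaluation $\Hom(E_{y_0},E_y)\otimes E_{y_0}\to E_y$. Since $\Hom(E_{y_0},E_y)$ is one-dimensional and any nonzero homomorphism $E_{y_0}\to E_y\cong E_{y_0}$ is an isomorphism (simplicity again), $\varphi_y$ is an isomorphism for every $y$. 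As $\varphi$ is then an isomorphism on every point-fibre $k(x,y)$, it is a global isomorphism, giving $E\cong p^*E_{y_0}\otimes q^*L$. Finally, to see that $L$ is numerically trivial I would restrict this isomorphism to $\{x_0\}\times Y$: it yields $E_{x_0}\cong L^{\oplus r}$ with $r=\rk E_{y_0}$, and since $E_{x_0}$ is numerically flat (again a restriction of $E$), $L$ is a direct summand of a numerically flat sheaf, hence numerically flat, hence numerically trivial.

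The main obstacle I anticipate is the step showing that $q_*F$ is a line bundle: this needs both that all slices $E_y$ are isomorphic to $E_{y_0}$ (so that $h^0$ of $\cHom(E_{y_0},E_y)$ is constantly $1$) and that the pushforward is locally free, both of which I extract from Corollary \ref{fibers-on-product}. Note that this uses normality of $X$, so for non-normal $X$ one should first reduce to the normal case. Everything after this point is formal once $L$ is known to be a line bundle, and the slicewise-isomorphism criterion neatly avoids having to prove a priori that $q_*F$ is numerically flat.
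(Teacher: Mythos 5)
Your proposal is correct and takes essentially the same route as the paper: both set $F=\cHom(p^*E_{y_0},E)$, invoke Corollary \ref{fibers-on-product} (hence normality/smoothness of $X$, which the paper also assumes ``for simplicity'') to see that $L=q_*F$ is locally free of rank $\dim\End(E_{y_0})=1$, and then show the evaluation map $q^*q_*F\otimes p^*E_{y_0}\to E$ is an isomorphism, deliberately avoiding Theorem \ref{product} so the argument stays algebraic. The only cosmetic differences are that you check the evaluation map is an isomorphism slice by slice and conclude from fibrewise bijectivity, where the paper checks surjectivity and uses that a surjection of locally free sheaves of equal rank is an isomorphism, and that you spell out the numerical triviality of $L$ (as a direct summand of the numerically flat $E_{x_0}$), which the paper leaves implicit.
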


\begin{proof}
For simplicity let us assume that $X$ is smooth and projective.
Let us set $F=\cHom(p^*E_{y_0}, E)$. Since the sheaf $F$ is
numerically flat, by Corollary \ref{fibers-on-product} we know
that $q_* F$ is locally free. Therefore the induced map
$$q^*q_* F \otimes p^* E_{y_0}\to E $$
is surjective on all fibers $\{ x\}\times Y$ and hence it is
surjective. Since $E_{y_0}$ is simple, $L=q_*F$ is a line bundle.
Therefore the above map is a surjective map of locally free
sheaves of the same rank and hence it is an isomorphism. This also
shows that $L$ is numerically trivial.
\end{proof}

\begin{Proposition} \label{simple}
Let  $X$ and $Y$ be  complete varieties.  Let $E$ be a numerically
flat sheaf on $X\times _k Y$. Assume that for some point $x_0\in
X$ and $y_0\in Y$, $E_{x_0}$ and $E_{y_0}$ are simple. Then both
$E_{x_0}$ and $E_{y_0}$ are line bundles and $E\simeq
p^*E_{y_0}\otimes q^*E_{x_0}$.
\end{Proposition}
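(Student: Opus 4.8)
The plan is to deduce the proposition almost directly from the preceding lemma. Since $E_{y_0}$ is simple, that lemma supplies a numerically trivial line bundle $L$ on $Y$ together with an isomorphism
$$E\simeq p^*E_{y_0}\otimes q^*L.$$
What then remains is to show that $E_{y_0}$ (and hence $E_{x_0}$) has rank one and that $L\simeq E_{x_0}$; for both I would exploit the simplicity of $E_{x_0}$.

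The key step is to restrict this decomposition to the fiber $\{x_0\}\times Y$. First I would note that $E_{y_0}$ is locally free on $X$: indeed $E$ is locally free, and its restriction to $X\times\{y_0\}$ remains numerically flat, hence locally free. Writing $r=\rk E_{y_0}$, the pullback $p^*E_{y_0}$ restricts along $\{x_0\}\times Y$ to the trivial bundle $E_{y_0}(x_0)\otimes_k\O_Y\simeq \O_Y^{\oplus r}$, since $p$ is constant there, while $q^*L$ restricts to $L$ under the identification $\{x_0\}\times Y\simeq Y$. Hence $E_{x_0}\simeq L^{\oplus r}$. Because $Y$ is complete and connected we have $\End(L)=H^0(Y,\O_Y)=k$, so $\dim_k\End(E_{x_0})=r^2$. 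The hypothesis that $E_{x_0}$ is simple, i.e. $\End(E_{x_0})=k$, then forces $r=1$.

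Once $r=1$, the sheaf $E_{y_0}$ is a line bundle and $E_{x_0}\simeq L$ is a line bundle as well, so both restrictions are line bundles; in particular $L\simeq E_{x_0}$. Substituting this back into the decomposition gives $E\simeq p^*E_{y_0}\otimes q^*E_{x_0}$, which is the assertion. I expect no genuine obstacle here beyond what is already packaged in the preceding lemma: the only points needing care are the verification that $E_{y_0}$ is locally free, so that its rank is well defined and the restricted pullback is honestly trivial of that rank, and the identity $H^0(Y,\O_Y)=k$ for a complete connected variety, which is exactly what converts the simplicity of $E_{x_0}$ into the numerical constraint $r=1$.
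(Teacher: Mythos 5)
Your proposal is correct and follows essentially the same route as the paper: the paper likewise invokes the preceding lemma to write $E\simeq p^*E_{y_0}\otimes q^*L$, computes $E_{x_0}\simeq q_*\bigl((p^*E_{y_0}\otimes q^*L)\otimes p^*\O_{x_0}\bigr)\simeq L^{\oplus \rk E_{y_0}}$ (your restriction to $\{x_0\}\times Y$ is the same computation), and concludes from simplicity of $E_{x_0}$ that $\rk E_{y_0}=1$ and $E_{x_0}\simeq L$. Your added details --- local freeness of $E_{y_0}$ and $\End(L^{\oplus r})\simeq M_r(k)$ via $H^0(Y,\O_Y)=k$ --- merely make explicit what the paper leaves implicit.
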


\begin{proof}
By the above lemma we know that $E\simeq p^*E_{y_0}\otimes q^*L$
for some line bundle $L$. Therefore $E_{x_0}\simeq
q_*((p^*E_{y_0}\otimes q^*L)\otimes p^*\O_{x_0})\simeq L^{\oplus
\rk E_{y_0}}$. Since $E_{x_0}$ is simple, $E_{y_0}$ is a line
bundle and $E_{x_0}\simeq L$.
\end{proof}

\medskip

The following definition is an analogue of \cite[Definition
2.5]{Gi} where the corresponding notion is defined for stratified
sheaves.

\begin{Definition} \label{def-abelian}
Let $E$ be a numerically flat sheaf on a complete variety $X/k$.
We say that $E$ is \emph{abelian} if there exists a numerically
flat sheaf $E'$ on $X\times _k X$ and a closed point $x_0\in X$
such that $E'$ restricted to both $p^{-1}(x_0)$ and $q^{-1}(x_0)$
is isomorphic to $E$.
\end{Definition}

Proposition \ref{simple} implies that a simple abelian numerically
flat sheaf on a complete variety has rank one. Since every
numerically flat sheaf  has a filtration with irreducible
quotients, we get the following corollary describing the category
of representations of $\pi ^S_{ab} (X,x)$.

\begin{Corollary} \label{abelian}
Let $E$ be an abelian numerically flat sheaf on a complete variety
$X$. Then $E$ has a filtration $0=E_0\subset E_1\subset \dots
\subset E_r=E$ in which all quotients $E_i/E_{i-1}$ are
numerically trivial line bundles. Moreover, if $E$ is simple then
it is a line bundle.
\end{Corollary}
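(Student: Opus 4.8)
The plan is to reduce both assertions to Proposition \ref{simple} together with the existence of Jordan--H\"older filtrations in $\Vect(X)$. I would settle the \emph{Moreover} clause first, as it is the clean part. Suppose $E$ is simple and abelian, and pick a numerically flat sheaf $E'$ on $X\times_k X$ and a point $x_0$ as in Definition \ref{def-abelian}. Apply Proposition \ref{simple} to $E'$, with $Y=X$ and both marked points taken to be $x_0$: the sheaves there called $E_{x_0}$ and $E_{y_0}$ are precisely the restrictions of $E'$ to $p^{-1}(x_0)$ and $q^{-1}(x_0)$, and by Definition \ref{def-abelian} both are isomorphic to $E$, hence simple. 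Proposition \ref{simple} then gives that both are line bundles, so $E$ itself is a line bundle; being numerically flat of rank one it is a numerically trivial line bundle.

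For the filtration I would use that every numerically flat sheaf admits a filtration $0=E_0\subset E_1\subset\dots\subset E_r=E$ with irreducible quotients, and that an irreducible numerically flat sheaf is simple (both recalled at the start of this section). It therefore suffices to prove that each quotient $E_i/E_{i-1}$ is again abelian: once this is known, the first paragraph identifies every $E_i/E_{i-1}$ with a numerically trivial line bundle, which is exactly the required conclusion. In other words, the whole statement reduces to showing that the class of abelian numerically flat sheaves is stable under subquotients.

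This stability is the step I expect to be the main obstacle, and I would obtain it from the Tannakian dictionary, which is where Theorem \ref{product} is essential. Set $G=\pi_1^S(X,x_0)$ and $V=E(x_0)$, and let $\rho_E\colon G\to\GL(V)$ be the monodromy of $E$. Under the isomorphism $\pi_1^S(X\times_k X,(x_0,x_0))\simeq G\times_k G$ of Theorem \ref{product}, the sheaf $E'$ corresponds to a representation $\rho'$ of $G\times_k G$ on $V'=E'(x_0,x_0)$, and the restrictions of $E'$ to $q^{-1}(x_0)$ and $p^{-1}(x_0)$ correspond to the restrictions of $\rho'$ along the factor embeddings $g\mapsto(g,1)$ and $g\mapsto(1,g)$ --- these being the maps on fundamental group schemes computed in the first part of the proof of Theorem \ref{product}. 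Using the two defining isomorphisms with $E$, taken compatibly on the fibre over $(x_0,x_0)$, I would identify $V'$ with $V$ so that both coordinate restrictions of $\rho'$ become $\rho_E$. Since $(g,1)$ and $(1,h)$ commute in $G\times_k G$, the operators $\rho_E(g)=\rho'(g,1)$ and $\rho_E(h)=\rho'(1,h)$ commute for all $g,h$; thus $\rho_E$ has commutative image and factors through the abelianization $\pi^S_{ab}(X,x_0)$.

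This identifies the abelian numerically flat sheaves with the representations of the commutative affine group scheme $\pi^S_{ab}(X,x_0)$, whose representation category is automatically closed under subobjects and quotients; this supplies the missing stability and completes the filtration argument. The one point I would treat with care is the compatibility of the two defining isomorphisms over the base point $(x_0,x_0)$: without it one only concludes that $\rho_E(G)$ commutes with a conjugate of itself, which does not force commutativity (already a sheaf $p^*E_1\otimes q^*E_1$ restricts to $E_1^{\oplus\,\rk E_1}$ on each factor without $E_1$ being abelian). I would therefore read Definition \ref{def-abelian} with this base-point compatibility built in, in line with the analogous \cite[Definition 2.5]{Gi}.
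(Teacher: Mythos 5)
Your proof is correct and, in substance, it is the paper's own argument made explicit. The \emph{Moreover} clause is obtained exactly as in the text: applying Proposition \ref{simple} to the witnessing sheaf $E'$ on $X\times_k X$ (with $Y=X$, $y_0=x_0$) is precisely how the paper deduces, in the sentence preceding the corollary, that a simple abelian sheaf has rank one. For the filtration clause, note that the paper's two-sentence derivation before the corollary (Jordan--H\"older filtration plus the rank-one statement) is incomplete for exactly the reason you identify --- one must know that the irreducible quotients are again abelian --- and the paper supplies this only in the Remark following Lemma \ref{group}, where the corollary is re-derived from the identification of abelian sheaves with representations of $\pi^S_{ab}(X,x)$ (Lemma \ref{group} combined with Theorem \ref{product}) together with \cite[Theorem 9.4]{Wa}; your third paragraph is an unwinding of the proof of Lemma \ref{group}, so the routes coincide. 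Where you go beyond the paper is the final caveat, and you are right to insist on it: Definition \ref{def-abelian} and Lemma \ref{group}, read with unanchored isomorphisms, are literally too weak, and your example genuinely breaks them --- for $W=V_1\boxtimes V_1$ one has $W|_{G\times\{e\}}\simeq W|_{\{e\}\times G}\simeq V_1^{\oplus\dim V_1}$ as $G$-modules, yet this factors through $G_{ab}$ only if $V_1$ does, and correspondingly $E_1^{\oplus\rk E_1}$ would be ``abelian'' without admitting a line-bundle filtration whenever $E_1$ is irreducible of rank $\ge 2$ with non-abelian monodromy. The paper's proof of Lemma \ref{group} silently adopts your compatible reading by writing $\tilde\rho(g)=\rho(g,e)=\rho(e,g)$, i.e., treating the two restrictions as equal on the same space rather than merely isomorphic; and in the only place abelianness is actually verified (Theorem \ref{abelian-variety}, with $E'=m^*E$ on an abelian variety) the two identifications are canonical and agree at $(0,0)$, so the base-point-compatible definition is clearly the intended one and nothing downstream is affected. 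With that reading built in, your argument is complete, modulo the same caveat the paper itself makes in Lemma \ref{group}: the pointwise computation with $g,h$ should be rewritten functorially for group schemes.
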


Note that the last part of the corollary is non-trivial as even on
complex projective varieties, a simple, semistable bundle need not
be stable.

\begin{Remark}
In the next section we will see that every numerically flat sheaf
on an abelian variety is abelian. Therefore the above corollary
generalizes \cite[Theorem 2]{MN} which describes numerically flat
sheaves on abelian varieties assuming boundedness of semistable
sheaves with fixed numerical invariants (this is proven in
\cite{La1}). Mehta and Nori proved this theorem using reduction to
the case of abelian varieties defined over an algebraic closure of
a finite field. Our proof is completely different.
\end{Remark}

\medskip

\begin{Lemma} \label{group}
Let $G$ be an affine $k$-group scheme. Then the category of
representations of the largest abelian quotient $G_{ab}$ of $G$ is
isomorphic to the full subcategory of the category of
representation of $G$, whose objects are those $G$-modules $V$ for
which there exists a $G\times G$-module $W$ such that we have
isomorphisms of $G$-modules $W|_{G\times \{e\}}\simeq W|_{\{e\}
\times G}\simeq V$.
\end{Lemma}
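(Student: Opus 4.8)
The plan is to establish the asserted identification of full subcategories of the category of representations of $G$ by proving the two inclusions separately, using throughout that $G_{ab}=G/[G,G]$, so that a representation $(V,\rho)$ lies in the essential image of $\mathrm{Rep}(G_{ab})$ exactly when $[G,G]\subseteq\ker\rho$, i.e. when the image $\rho(G)\subseteq\GL(V)$ is abelian. For the inclusion $\mathrm{Rep}(G_{ab})\subseteq\{V:\exists W\}$, the key observation is that the multiplication $m\colon G_{ab}\times G_{ab}\to G_{ab}$ is a homomorphism of group schemes precisely because $G_{ab}$ is commutative. So, given $V$ with structure map $\rho\colon G_{ab}\to\GL(V)$, I would let $W$ be the vector space $V$ equipped with the $G\times G$-action given by the composite $G\times G\to G_{ab}\times G_{ab}\stackrel{m}{\to}G_{ab}\stackrel{\rho}{\to}\GL(V)$. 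Restricting along $g\mapsto(g,e)$ one gets $m(\bar g,\bar e)=\bar g$, whence $W|_{G\times\{e\}}\simeq V$, and symmetrically $W|_{\{e\}\times G}\simeq V$. This produces the required $W$.

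For the reverse inclusion I would start from a $W$ with $W|_{G\times\{e\}}\simeq W|_{\{e\}\times G}\simeq V$ and first normalise: identify $W$ with $V$ through the isomorphism $W|_{G\times\{e\}}\simeq V$, so the first factor acts by the given $\rho=\rho_V$. Writing $\rho_1(g)=\rho_W(g,e)$ and $\rho_2(h)=\rho_W(e,h)$, these commute since $(g,e)$ and $(e,h)$ commute in $G\times G$; hence $\rho_2(G)$ centralises $\rho(G)$, i.e. $\rho_2$ takes values in $\End_G(V)^\times$. The second hypothesis says $\rho_2\simeq\rho$ as $G$-modules. Thus the statement to prove reduces to: \emph{if a $G$-module $V$ carries a second action $\rho_2$ commuting with $\rho$ and isomorphic to $\rho$, then $\rho(G)$ is abelian.}

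To attack this reduced statement I would first dispose of the case where $V$ is simple: here $\End_G(V)=k$ by Schur's lemma, and the commuting isomorphic pair $(\rho,\rho_2)$ is exactly the situation analysed in Proposition \ref{simple}, forcing $V$ to be an external tensor product which, for simple $V$, collapses to rank one, so that $\rho(G)\subseteq\GL_1$ is abelian. For general $V$ the natural route is to pass to the isotypic decomposition $V\simeq\bigoplus_i V_i\otimes M_i$ supplied by $\End_G(V)\simeq\prod_i\End(M_i)$, note that $\rho_2(G)$ then acts only on the multiplicity spaces $M_i$, and play this description against the isomorphism $\rho_2\simeq\rho$ to conclude that each constituent $V_i$ must in fact be one-dimensional, so $\rho$ factors through $G_{ab}$; one can then combine this with the filtration by irreducible quotients used in Corollary \ref{abelian}. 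The hard part will be exactly this reverse inclusion for non-simple $V$: the bare centraliser condition together with $\rho_2\simeq\rho$ does not by itself pin down the dimensions of the constituents, and the delicate point is to extract from the compatibility of the two commuting actions that no higher-dimensional irreducible constituent can occur. This is the step I expect to require the most care.
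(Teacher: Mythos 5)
Your first inclusion is exactly the paper's argument: since $G_{ab}$ is abelian, the multiplication $m\colon G_{ab}\times G_{ab}\to G_{ab}$ is a homomorphism of group schemes, and pulling a $G_{ab}$-module back along $G\times G\to G_{ab}\times G_{ab}\stackrel{m}{\to}G_{ab}$ produces $W$. The divergence, and the gap, is in the reverse inclusion. The paper does not attempt your reduction at all: it reads the hypothesis as saying that, after identifying $W$ with $V$, the two restrictions \emph{agree} as homomorphisms, writing $\tilde\rho(g)=\rho(g,e)=\rho(e,g)$; then the identity $\rho(g,h)=\rho(g,e)\cdot\rho(e,h)=\rho(e,h)\cdot\rho(g,e)$ gives $\tilde\rho(ghg^{-1}h^{-1})=e$, so $\tilde\rho$ kills the derived subgroup and factors through $G_{ab}$ — two lines, no decomposition theory (the paper states this ``naively'' for algebraic groups and delegates the scheme-theoretic rewriting to \cite[10.1]{Wa}).

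Your more scrupulous reading — retain only isomorphisms, normalise so that $\rho_1=\rho_V$ and $\rho_2$ is a commuting action with $\rho_2\simeq\rho_V$ — reduces the lemma to a statement that is \emph{false}, so the step you flagged as requiring the most care cannot be carried out by any amount of care. Take $G=S_3$ (as a constant group scheme), let $\sigma$ be its two-dimensional irreducible representation, and set $W=(\sigma\boxtimes\mathbf{1})\oplus(\mathbf{1}\boxtimes\sigma)$ and $V=\sigma\oplus\mathbf{1}^{\oplus 2}$. Then $W|_{G\times\{e\}}\simeq W|_{\{e\}\times G}\simeq V$, the commuting actions are $\rho_1(g)=\sigma(g)\oplus\id$ and $\rho_2(h)=\id\oplus\,\sigma(h)$, and indeed $\rho_2\simeq\rho_1$, yet $\rho_1(G)$ is non-abelian and $V$ does not come from $G_{ab}=\ZZ/2$. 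In your isotypic analysis this is precisely the escape you worried about: $\rho_2$ acts through $\End_G(V)$ on the two-dimensional multiplicity space of the trivial constituent \emph{by} $\sigma$, which recreates the $\sigma$-summand inside $\rho_2$ and makes $\rho_2\simeq\rho_1$ without forcing any constituent $V_i$ to be one-dimensional. So the lemma is only correct, and the paper's proof only valid, under the stronger reading in which both restrictions are identified with $V$ through the \emph{same} identification, i.e. $\rho(g,e)=\rho(e,g)$ as maps to $\GL(V)$; this is also the form in which it is applied (for $W=m^*E$ on an abelian variety the two restrictions are canonically equal to $E$, not merely isomorphic). Your Schur-lemma treatment of simple $V$ is correct — it is the representation-theoretic shadow of Proposition \ref{simple} — but it does not extend to general $V$, and your overall route for the reverse inclusion fails.
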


\begin{proof}
We give a naive proof in the case of algebraic groups. The reader is
asked to rewrite it in terms of group schemes using a precise
definition of derived subgroup scheme (see \cite[10.1]{Wa}).

Let $V$ be a $G_{ab}$-module. Since $G_{ab}$ is abelian, the
multiplication map $G_{ab}\times G_{ab}\to G_{ab}$ is a
homomorphism of affine group schemes. Hence we can treat $V$ as a
$G\times G$-module obtaining the required module $W$.

Now let $V$ be a $G$-module for which there exists  $\rho:
G\times G \to \GL (W)$ and isomorphisms of $G$-modules
$W|_{G\times \{e\}}\simeq W|_{\{e\} \times G}\simeq V$. The
representation $\tilde \rho: G\to \GL (V)$ corresponding to $V$ is
given by $\tilde \rho (g)=\rho (g,e)=\rho (e,g)$. Since
$$\rho (g,h)=\rho (g,e)\cdot \rho (e,h)=\rho (e,h)\cdot \rho (g,e),$$
we have $\tilde \rho (ghg^{-1}h^{-1})=e$. Therefore $\tilde \rho $
vanishes on the derived subgroup of $G$ and hence it defines the
required representation of $G_{ab}$.
\end{proof}

\medskip

 The largest abelian quotient of the S-fundamental group scheme
of $X$ is called the \emph{abelian part} of $\pi ^S(X,x)$ and
denoted by $\pi ^S_{ab}(X,x)$.

Lemma \ref{group}, together with Theorem \ref{product}, implies
that $\pi ^S_{ab}(X,x)$ is Tannaka dual to the (neutral Tannakian)
category of abelian numerically flat sheaves on $X$. This also
explains the name ``abelian'' in Definition \ref{def-abelian}.

\begin{Remark}
Note that Corollary \ref{abelian} follows immediately from the
interpretation of abelian numerically flat bundles given above and
the fact that irreducible representations of abelian group schemes
are one-dimensional (see \cite[Theorem 9.4]{Wa}). In fact, this
also proves that all subsheaves $E_i$ in the filtration are
abelian.
\end{Remark}

\medskip

Let $G$ be an abelian group. Then we define the $k$-group scheme
$\Diag (G)$ by setting $$(\Diag (G)) (\Spec A)={\Hom} _{\rm gr}
(G, A^{\times})$$ for a $k$-algebra $A$, where ${\Hom} _{\rm gr}
(G, A^{\times})$ denotes all group homomorphisms from $G$ to the
group of units in $A$. This is the same as taking $\Spec k[G]$
with the natural $k$-group scheme structure.

\begin{Theorem} \label{abelian-part}
Let $X$ be a smooth projective variety defined over an
algebraically closed field $k$. If the characteristic of $k$ is
positive then we have an isomorphism
$$ \pi ^S_{ab}(X, x)\simeq \lim_{{\leftarrow}}{\hat{G}} \times \Diag (({\Pic} ^{\tau}X)_{red}),$$
where $\hat{G}$ denotes the Cartier dual of $G$ and the inverse
limit is taken over all finite local group subschemes $G$ of $\Pic
^0 X$. If $k$ has characteristic zero then we have
$$\pi ^S_{ab}(X, x)\simeq H^1(X, \O_X)^*\times  \Diag
({\Pic} ^{\tau}X) .$$
\end{Theorem}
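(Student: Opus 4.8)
The plan is to combine the Tannakian description of $\pi ^S_{ab}(X,x)$ recorded just before the theorem with the structure theory of commutative affine group schemes and the computation of the abelian part of $\pi ^U(X,x)$ recalled in Section \ref{unipotent}. First I would note that $\pi ^S_{ab}(X,x)$ is a commutative affine $k$-group scheme, hence a filtered inverse limit of its commutative affine algebraic quotients. Over an algebraically closed (so perfect) field every commutative affine algebraic group decomposes canonically as a direct product of its maximal subgroup of multiplicative type and its maximal unipotent subgroup; since $k$ is algebraically closed the multiplicative-type factor is diagonalizable, and since the decomposition is functorial it passes to the inverse limit. This gives a canonical splitting $\pi ^S_{ab}(X,x)\simeq M\times U$ with $M$ diagonalizable and $U$ unipotent, reducing the theorem to identifying $M$ and $U$ separately.

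To identify the diagonalizable factor I would compute $\Hom (\pi ^S_{ab}(X,x),\GG _m)$. A character is the same as a one-dimensional representation, i.e. a numerically flat line bundle on $X$, and since $X$ is projective this is exactly a numerically trivial line bundle; these are classified by the $k$-points of $\Pic ^{\tau}X$. Conversely each numerically trivial line bundle gives a character of $\pi ^S(X,x)$ which, $\GG _m$ being abelian, factors through $\pi ^S_{ab}(X,x)$. A unipotent group admits no nontrivial characters, so every character factors through $M$; as $M$ is diagonalizable, $M\simeq \Diag$ of its character group. Because $k$ is algebraically closed, $\Pic ^{\tau}X$ and $(\Pic ^{\tau}X)_{red}$ have the same $k$-points, and we obtain $M\simeq \Diag ((\Pic ^{\tau}X)_{red})$.

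To identify the unipotent factor I would observe that $U$ is the largest unipotent quotient of the largest abelian quotient of $\pi ^S(X,x)$, hence the largest abelian unipotent quotient of $\pi ^S(X,x)$; by the same universal property it coincides with the abelian part of $\pi ^U(X,x)$. I then import the computations of Section \ref{unipotent}: in characteristic zero the abelian part of $\pi ^U(X,x)$ is the vector group $H^1(X,\O_X)^*$, while in positive characteristic it is the inverse limit of the Cartier duals $\hat G$ of the finite local group subschemes $G$ of $\Pic X$. A local group scheme is connected, so these $G$ are exactly the finite local subgroup schemes of $\Pic ^0 X$, which matches the indexing in the statement. Assembling $M$ and $U$ yields both displayed isomorphisms; in the characteristic-zero case one uses in addition that $\Pic ^{\tau}X$ is reduced, so that $\Diag ((\Pic ^{\tau}X)_{red})=\Diag (\Pic ^{\tau}X)$.

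The step requiring the most care is justifying the clean direct-product splitting $\pi ^S_{ab}(X,x)\simeq M\times U$ at the level of (pro-)group schemes and checking its compatibility with the two separate identifications; concretely this amounts to verifying that, in the category of abelian numerically flat sheaves, extensions between non-isomorphic numerically trivial line bundles contribute nothing beyond the product structure, which is precisely what prevents the multiplicative and unipotent factors from interacting. Everything else is bookkeeping with results already available: the Tannakian dictionary for $\pi ^S_{ab}(X,x)$, the filtration of abelian sheaves by numerically trivial line bundles from Corollary \ref{abelian}, the classification of such line bundles by $\Pic ^{\tau}X$, and the Nori-type computation of the abelian unipotent part in Section \ref{unipotent}.
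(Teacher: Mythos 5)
Your proposal is correct and follows essentially the same route as the paper: decompose $\pi^S_{ab}(X,x)$ into its multiplicative and unipotent parts (the paper simply cites \cite[Theorem 9.5]{Wa}, which holds for arbitrary abelian affine group schemes over a perfect field, so your reduction to algebraic quotients is sound but unnecessary), identify the diagonalizable factor via the character group $(\Pic^{\tau}X)_{red}(k)$ using projectivity, and identify the unipotent factor with the abelian part of $\pi^U(X,x)$ via Nori's results from Subsection \ref{unipotent}. Your explicit observation that finite local subgroup schemes of $\Pic X$ lie in $\Pic^0 X$ (local implies connected) is a detail the paper leaves implicit, and your closing worry about extensions between line bundles is already subsumed by the structure theorem you invoked, so no further verification is needed there.
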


\begin{proof}
By \cite[Theorem 9.5]{Wa}  $ \pi ^S_{ab}(X, x)$ is a product of its unipotent
and multiplicative parts.

By Nori's results mentioned in Subsection \ref{unipotent}
(see \cite[Chapter IV, Proposition 6]{No}) we know
that in positive characteristic the abelian part of the unipotent
part  (or equivalently, the unipotent part of the abelian part)
$\pi ^U (X, x)$ of the S-fundamental group is isomorphic to the
inverse limit of Cartier duals of finite local group subschemes of
$\Pic X$. In characteristic zero, $\pi ^U _{ab}(X, x)=H^1(X,
\O_X)^*$.

 On the other hand, since the character group
of $\pi ^S(X, x)$ is isomorphic to the reduced scheme underlying
$\Pic ^{\tau} X$ (in characteristic zero $\Pic ^\tau X$ is already
reduced), the diagonal part of $\pi ^S_{ab}(X, x)$ is given by
$\Diag (({\Pic} ^{\tau}X)_{red})$, which finishes the proof.
\end{proof}

\medskip

Let us recall that the Neron--Severi group  $\NS (X)=({\Pic}
X)_{red}/(\Pic ^0X)_{red}$ is finitely generated. We have a short
exact sequence
$$0\to ({\Pic}^0X)_{red} \to ({\Pic} ^{\tau}X)_{red}\to \NS (X)_{tors} \to 0,$$
where $ \NS (X)_{tors}$ is the torsion group of $\NS (X)$ (it is a
finite group). Therefore we have a short exact sequence
$$0\to \Diag ( \NS (X)_{tors})\to \Diag (({\Pic} ^{\tau}X)_{red}) \to \Diag (({\Pic}^0X)_{red})\to 0.$$

\section{Numerically flat sheaves on abelian varieties}

Let $A$ be an abelian variety defined over an algebraically closed
field of characteristic $p$ and let $A_n$ be the kernel of the
multiplication by $n$ map $n_A: A\to A$.

Let $A_{p^n}^r$ be the reduced part of $A_{p^n}$. Then $A_{p^n}$
is a product of $A_{p^n}^r$, its Cartier dual $\hat{A}_{p^n}^r$
(which is a local and diagonalizable group scheme) and a
local--local group scheme $A_{p^n}^0$ (see \cite[Section 15]{Mu};
\emph{local--local} means that both the group scheme and its
Cartier dual are local). Let $r$ be the $p$-rank of $A$. Then
$A_{p^n}^r\simeq (\ZZ/p^n\ZZ)^r$, $\hat{A}_{p^n}^r\simeq
(\mu_{p^n})^r$ and $A_{p^n}^0$ is a group scheme of order
$p^{2(\dim A -r)n}$.

The \emph{$p$-adic discrete Tate group} $T_p^d(A)$ is defined as
the inverse limit $\displaystyle\lim_{\longleftarrow} A_{p^n}^r$
(see \cite[Section 18]{Mu}). This is a $\ZZ _p$-module. Similarly,
we define \emph{the $p$-adic local--local Tate group} $T_p^0(A)$
as the inverse limit  $\displaystyle\lim_{{\longleftarrow}}
A_{p^n}^0$. For an arbitrary prime $l$ (possibly $l=p$) we also
define the \emph{$l$-adic Tate group scheme} as
$T_l(A)=\displaystyle\lim_{{\longleftarrow}}A_{l^n}$ (note that
for $l=p$ our notation differs from that in \cite{Mu}).

The following theorem is analogous to \cite[Theorem 21]{dS}:

\begin{Theorem} \label{abelian-variety}
Let $A$ be an abelian variety defined over an algebarically closed
field $k$. Then $\pi ^S(A, 0)$ is abelian and it decomposes as a
product of its unipotent and diagonal parts. Moreover, if the
characteristic of $k$ is positive then
$$\pi ^S(A, 0)\simeq {T}_p^0 (A)\times T_p^d(A)\times \Diag
({\Pic} ^0A) .$$ In characteristic zero we have
$$\pi ^S(A, 0)\simeq H^1(A, \O_A)^*\times \Diag
({\Pic} ^0A).$$
\end{Theorem}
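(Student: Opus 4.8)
The plan is to reduce everything to Theorem \ref{abelian-part}, whose hypotheses $A$ satisfies as a smooth projective variety, once I have shown that $\pi ^S(A,0)$ is abelian. By the interpretation following Lemma \ref{group} (which uses Theorem \ref{product}), $\pi ^S_{ab}(A,0)$ is Tannaka dual to the category of \emph{abelian} numerically flat sheaves on $A$; hence $\pi ^S(A,0)$ is abelian precisely when every numerically flat sheaf on $A$ is abelian in the sense of Definition \ref{def-abelian}.

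First I would prove the latter. The key point is that the group law $m:A\times A\to A$ furnishes the extension required by Definition \ref{def-abelian}. Given a numerically flat sheaf $E$ on $A$, set $E'=m^*E$. Since pulling back along any morphism preserves nefness of a bundle and of its dual, $E'$ is numerically flat on $A\times A$. The restriction of $m$ to $\{0\}\times A$ and to $A\times\{0\}$ is the identity of $A$, so $E'|_{p^{-1}(0)}\simeq E$ and $E'|_{q^{-1}(0)}\simeq E$; thus $E$ is abelian. Consequently the categories of numerically flat and of abelian numerically flat sheaves on $A$ coincide, so $\pi ^S(A,0)=\pi ^S_{ab}(A,0)$ is abelian, and its splitting into unipotent and diagonal parts is the structure theorem \cite[Theorem 9.5]{Wa} invoked in Theorem \ref{abelian-part}.

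It then remains to specialize the formulas of Theorem \ref{abelian-part} to $X=A$. For the diagonal factor I would use that $\NS(A)$ is torsion free, so $\Pic ^{\tau}A=\Pic ^0A$, and that $\Pic ^0A=\hat A$ is the reduced dual abelian variety; hence $\Diag(({\Pic}^{\tau}A)_{red})=\Diag({\Pic}^0A)$, matching both claimed formulas. In characteristic zero the unipotent factor returned by Theorem \ref{abelian-part} is already $H^1(A,\O_A)^*$, which completes that case.

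The one genuine computation is the positive-characteristic unipotent factor, which Theorem \ref{abelian-part} gives as $\lim_{\leftarrow}\hat G$ over the finite local subgroup schemes $G$ of $\Pic ^0A=\hat A$. Here I would use the canonical identification $\hat A[p^n]=\widehat{A_{p^n}}$ together with the decomposition $A_{p^n}=A_{p^n}^r\times\hat A_{p^n}^r\times A_{p^n}^0$ recalled above. Dualizing term by term and discarding the étale summand identifies the maximal finite local subgroup scheme of $\hat A[p^n]$ as $G_n=(\mu_{p^n})^r\times\widehat{A_{p^n}^0}$, whose Cartier dual is $\hat G_n\simeq A_{p^n}^r\times A_{p^n}^0$ (Cartier duality being an involution). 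The $G_n$ are cofinal, so passing to the inverse limit gives $\lim_{\leftarrow}\hat G\simeq T_p^d(A)\times T_p^0(A)$ directly from the definitions of the discrete and local--local Tate groups. Together with the diagonal part this yields $\pi ^S(A,0)\simeq T_p^0(A)\times T_p^d(A)\times\Diag({\Pic}^0A)$. The main obstacle I anticipate is the bookkeeping of Cartier duality and the compatibility of the transition maps in this inverse limit; by contrast the abelianness step is short once one thinks to pull back along $m$.
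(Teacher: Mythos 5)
Your proposal is correct and follows essentially the same route as the paper: pulling back along the addition map $m:A\times A\to A$ to show every numerically flat sheaf is abelian, invoking Theorem \ref{abelian-part}, and identifying the finite local subgroup schemes of $\hat A=\Pic^0 A$ with Cartier duals giving $T_p^d(A)\times T_p^0(A)$ in the limit. Your handling of the Cartier-duality bookkeeping (the identification $\hat A[p^n]\simeq\widehat{A_{p^n}}$, the cofinality of the maximal local subgroups $G_n$) is in fact slightly more explicit than the paper's one-line version of the same computation, but it is the same argument.
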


\begin{proof}
Let us first remark that every numerically flat bundle $E$ on $A$
is abelian. To show this one can take the addition map $m: A\times
_k A\to A$. Then $E'=m^*E$ restricted to either $p^{-1}(0)$ or
$q^{-1} (0)$ is isomorphic to $E$. This shows that $\pi ^S(A, 0)$
is abelian and we can use Theorem \ref{abelian-part}. In positive
characteristic, local group subschemes of the dual abelian variety
$\hat A=\Pic ^0A$ are of the form $ A_{p^n}^0\times
\hat{A}_{p^n}^r$. So the inverse limit of their Cartier duals is
isomorphic to ${T}_p^0 (A)\times T_p^d(A)$. On the other hand,
since $\Pic ^{\tau} A=\Pic ^0A$, the diagonal part of $\pi ^S(A,
0)$ is given by $\Diag ({\Pic} ^0A)$.
\end{proof}

\medskip

We can also give another proof of the above theorem without
using that $\pi ^S(A, 0)$ is abelian (which uses the rather
difficult Theorem \ref{product}). Namely, as in the proof of \cite[Theorem
21]{dS}  it is sufficient to show that for every indecomposable
numerically flat sheaf $E$ on $A$ there exists a unique line
bundle $L\in \Pic ^0(A)$, such that $L \otimes E$ has a filtration
by subbundles with each successive quotient trivial. But by
Corollary \ref{abelian} every numerically flat bundle $E$ on $A$
has a filtration $0=E_0\subset E_1\subset \dots \subset E_r=E$ in
which all quotients $E_i/E_{i-1}$ are numerically trivial line
bundles (in fact, this filtration is just a Jordan--H\"older
filtration for an arbitrary polarization, but Corollary
\ref{abelian} says a bit  more about this filtration).

Now if $E$ is indecomposable then it is easy to see that all the
quotients in the filtration are isomorphic (otherwise one can
prove by induction that the filtration would split as $H^1(A,L)=0$
for a non-trivial line bundle $L$ on the abelian variety $A$),
which finishes the proof.

\medskip

\begin{Corollary}
Let $A$ be an abelian variety defined over an algebraically closed
field of positive characteristic. Then
$$ \pi ^N (A, 0)\simeq \lim _{\longleftarrow}A_n\simeq
\prod _{l \, {\rm prime}} T_l(A).$$
\end{Corollary}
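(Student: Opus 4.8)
The plan is to obtain the corollary from Theorem \ref{abelian-variety} by passing to maximal pro-finite quotients. Recall that $\pi^N(A,0)$ is the maximal pro-finite quotient of $\pi^S(A,0)$, and that forming the maximal pro-finite quotient commutes with finite products of affine group schemes (any finite quotient $Q$ of $G_1\times G_2$ is dominated by $Q_1\times Q_2$, where $Q_i$ is the image of $G_i$, so such products are cofinal among all finite quotients). Starting from the positive-characteristic formula
$$\pi^S(A,0)\simeq T_p^0(A)\times T_p^d(A)\times \Diag(\Pic^0 A),$$
I would then compute the maximal pro-finite quotient of each factor separately.

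The factors $T_p^0(A)$ and $T_p^d(A)$ are already pro-finite, being inverse limits of the finite group schemes $A_{p^n}^0$ and $A_{p^n}^r$, so they are unchanged. For the diagonalizable factor $\Diag(M)$ with $M=\Pic^0 A=\hat A(k)$ the key point is that its finite quotients correspond exactly to the finite subgroups of $M$; hence its maximal pro-finite quotient is $\Diag(M_{tors})$ with $M_{tors}=\hat A(k)_{tors}$. Decomposing $M_{tors}=\bigoplus_l \hat A(k)[l^\infty]$ into $l$-primary parts and using that $\Diag$ sends this direct sum to a product, I would identify the pieces: for $l\ne p$ one has $\hat A(k)[l^\infty]\simeq(\QQ_l/\ZZ_l)^{2g}$ and $\Diag(\QQ_l/\ZZ_l)=\lim_n\mu_{l^n}\simeq\lim_n\ZZ/l^n\ZZ=\ZZ_l$ (as $\mu_{l^n}$ is \'etale), whence $\Diag(\hat A(k)[l^\infty])\simeq T_l(A)$; for $l=p$ one recognizes $\Diag(\hat A(k)[p^\infty])$ as the multiplicative factor $\lim_n\hat A_{p^n}^r$.

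It remains to reassemble the $p$-primary information. The canonical splitting $A_{p^n}\simeq A_{p^n}^r\times\hat A_{p^n}^r\times A_{p^n}^0$ into \'etale, multiplicative and local--local parts is functorial, hence respected by the multiplication-by-$p$ transition maps, and inverse limits commute with the product, so
$$T_p^d(A)\times\Big(\lim_n\hat A_{p^n}^r\Big)\times T_p^0(A)\simeq\lim_n A_{p^n}=T_p(A).$$
Collecting the factors gives $\pi^N(A,0)\simeq\prod_l T_l(A)$, and the identification $\prod_l T_l(A)\simeq\lim_{\longleftarrow}A_n$ follows from the coprime decomposition $A_n\simeq\prod_{l\mid n}A_{l^{v_l(n)}}$ after taking the inverse limit over $n$ ordered by divisibility. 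I expect the main obstacle to be the computation of the maximal pro-finite quotient of the diagonalizable part together with the bookkeeping that matches its $p$-primary component $\lim_n\hat A_{p^n}^r$ with the multiplicative factor of $T_p(A)$; once the three canonical parts of $A_{p^n}$ are tracked compatibly with the transition maps, everything else is formal.
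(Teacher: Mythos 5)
Your proposal is correct and follows essentially the route the paper intends: the corollary is stated as an immediate consequence of Theorem \ref{abelian-variety}, obtained by applying the maximal pro-finite quotient functor (the exact functor $F$ introduced in the proof of the corollary to Theorem \ref{Albanese}) to the decomposition $\pi^S(A,0)\simeq T_p^0(A)\times T_p^d(A)\times\Diag(\Pic^0 A)$, with the paper leaving the details implicit. Your verification that completion commutes with products, that the completion of $\Diag(M)$ is $\Diag(M_{tors})$, and the reassembly of the \'etale, multiplicative and local--local parts of $A_{p^n}$ into $T_p(A)$ simply spells out this bookkeeping correctly.
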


The above corollary also follows from \cite[Remark 3]{No2} and
\cite[Theorem 2.3]{MS}. It is well known that the above corollary
implies the Serre--Lang theorem (see \cite[Section 18]{Mu}),
although this sort of proof is much more complicated than the original
one. In \cite{No2} Nori had to use the Serre--Lang theorem to
prove the corollary.

\section{The Albanese morphism}

Let $X$ be a smooth projective variety and let $x\in X$ be a fixed
point. Let $\alb _X:X\to \Alb X$ be the Albanese morphism mapping
$x$ to $0$. The variety $\Alb X$ is abelian and it is dual to the
reduced scheme underlying $\Pic ^0X$. Since the S-fundamental
group scheme of an abelian variety is abelian, we have the induced
homomorphism $\pi^S_{ab}(X,x)\to \pi^S_1(\Alb X, 0)$. The
following proposition proves that this homomorphism is faithfully
flat and it describes its kernel.

\begin{Theorem} \label{Albanese}
We have the following short exact sequence:
$$0\to \lim_{G\subset \Pic^0 X} \widehat{{G/G_{red}}}\times \Diag (\NS (X)_{tors})\to \pi^S_{ab}(X,x)\to
\pi^S_1(\Alb X, 0)\to 0,$$ where the limit is taken over all local
group schemes $G\subset \Pic^0 X$.
\end{Theorem}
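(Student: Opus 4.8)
The plan is to compute both group schemes by means of Theorems~\ref{abelian-part} and~\ref{abelian-variety} and then to study the homomorphism $\phi\colon \pi^S_{ab}(X,x)\to\pi^S_1(\Alb X,0)$ induced by $\alb_X$ on its multiplicative and unipotent parts separately. Set $A=\Alb X$; this is an abelian variety with $\Pic^0A=(\Pic^0X)_{red}$, and $\alb_X^*$ identifies $\Pic^0A$ with $(\Pic^0X)_{red}\subset(\Pic^\tau X)_{red}$. Both source and target are commutative affine $k$-group schemes, so by \cite[Theorem~9.5]{Wa} each is the product of its unipotent part and its diagonalizable part; since $\phi$ is a homomorphism it respects this (functorial) decomposition, and therefore $\phi$ is faithfully flat as soon as both of its parts are, while $\ker\phi$ is the product of the two kernels. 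In characteristic zero $\Pic^0X$ is reduced, the local limit is trivial, and only the $\Diag(\NS(X)_{tors})$-term survives; I would therefore carry out the substantive work in positive characteristic.

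I would treat the diagonalizable part first, as it is routine. The character group of $\pi^S_1(X,x)$ is $(\Pic^\tau X)_{red}$ and that of $\pi^S_1(A,0)$ is $\Pic^0A$, the characters being precisely the numerically trivial line bundles; the map of character groups induced by $\phi$ is the pullback $\alb_X^*$, that is, the inclusion $(\Pic^0X)_{red}\hookrightarrow(\Pic^\tau X)_{red}$ with cokernel $\NS(X)_{tors}$. Applying the exact contravariant functor $\Diag$ to the short exact sequence recalled just after Theorem~\ref{abelian-part} shows that the diagonalizable part of $\phi$ is the surjection $\Diag((\Pic^\tau X)_{red})\to\Diag((\Pic^0X)_{red})$ with kernel $\Diag(\NS(X)_{tors})$.

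The unipotent part carries the real content. By Theorem~\ref{abelian-part} the unipotent part of $\pi^S_{ab}(X,x)$ is the inverse limit $\lim_{G}\hat G$ of Cartier duals of finite local subgroup schemes $G\subset\Pic^0X$, and by Theorem~\ref{abelian-variety} the unipotent part of $\pi^S_1(A,0)$ is the analogous limit over finite local $G'\subset\Pic^0A=(\Pic^0X)_{red}$. Writing $L(Z)$ for the union of all finite local subgroup schemes of a commutative group scheme $Z$, the content of Nori's description is that, functorially in $X$, the group scheme $\pi^U_{ab}(X,x)$ is the Cartier dual of $L(\Pic^0X)$; I would use this to identify the unipotent part of $\phi$ with the Cartier dual of the closed immersion $L((\Pic^0X)_{red})\hookrightarrow L(\Pic^0X)$ induced by $\alb_X^*$. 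Because the transition maps $\hat G_2\to\hat G_1$ (for $G_1\subset G_2$) are Cartier duals of closed immersions, hence faithfully flat, the induced map of inverse limits is surjective, and its kernel is the Cartier dual of the cokernel of this inclusion. Using the extension $0\to(\Pic^0X)_{red}\to\Pic^0X\to Q\to 0$ with $Q$ an infinitesimal group scheme, one reads off the cokernel at finite level as $G/G_{red}$, where $G_{red}=G\cap(\Pic^0X)_{red}$, and hence the kernel as $\lim_{G}\widehat{G/G_{red}}$.

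Assembling the two parts, $\phi$ is faithfully flat and $\ker\phi=\lim_{G}\widehat{G/G_{red}}\times\Diag(\NS(X)_{tors})$, which is exactly the asserted short exact sequence. The step I expect to be the main obstacle is the unipotent computation: one must verify that Nori's identification of $\pi^U_{ab}$ with the Cartier dual of the local part of the Picard scheme is functorial in $X$, that $\alb_X^*$ matches the system of finite local subgroup schemes of $\Pic^0A$ with those of $(\Pic^0X)_{red}$ sitting inside the possibly non-reduced $\Pic^0X$, and that Cartier duality is tracked correctly through the inverse limits, so that the cokernel of the inclusion of local parts is precisely $\lim_{G}\widehat{G/G_{red}}$. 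The delicate bookkeeping with the non-reduced scheme $\Pic^0X$ and with the meaning of $G_{red}$ is where the care is required.
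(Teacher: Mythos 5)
Your proposal is correct and takes essentially the same route as the paper: the paper also computes both group schemes via Theorem~\ref{abelian-part} (and Theorem~\ref{abelian-variety}) and identifies the homomorphism as the one induced by the closed embedding $\alb_X^*\colon \Pic^0(\Alb X)\simeq(\Pic^0X)_{red}\hookrightarrow \Pic^0X$, reading off faithful flatness and the kernel from the resulting commutative diagram. Your separate treatment of the diagonalizable and unipotent parts, with $G_{red}=G\cap(\Pic^0X)_{red}$ and the Cartier-duality/Mittag--Leffler bookkeeping, merely makes explicit the computation the paper compresses into ``one can easily describe its kernel.''
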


\begin{proof}
The Albanese morphism induces $\alb_X^*:\Pic ^0(\Alb X)\simeq(\Pic
^0X)_{red} \to \Pic^0X$, which is the natural closed embedding.
 By Theorem \ref{abelian-part} we have the
commutative diagram
$$ \xymatrix{
&\pi ^S_{ab} (X,x)\ar[r]\ar[d]^{\simeq} &\pi^S_1(\Alb X, 0)\ar[d]^{\simeq} \\
& {\displaystyle\lim_{G\subset \Pic ^0X}} \hat{G}\times \Diag
(({\Pic}^{\tau} X)_{red})\ar[r]& {\displaystyle\lim_{G\subset
(\Pic ^0X)_{red}}}\hat{G}\times \Diag (({\Pic}^0 X)_{red}), }$$
where the limits are taken over local group schemes $G$ and the
lower horizontal map is induced by $\alb_X^*$. In particular, the
homomorphism $\pi^S_{ab}(X,x)\to \pi^S_1(\Alb X, 0)$ is faithfully
flat and one can  easily describe its kernel.
\end{proof}

\medskip

Theorem \ref{Albanese} implies in particular that if $\Pic^{0}X$
is reduced then the sequence
$$0\to \Diag (\NS (X)_{tors})\to \pi^S_{ab}(X,x)\to \pi^S_1(\Alb X, 0)\to
0$$ is exact. Moreover, if $\Pic^{\tau}X$ is connected and reduced
(e.g., if $X$ is a curve or  a product of two curves as in proof
of Proposition \ref{curves}) then $\pi^S_{ab}(X,x)\to \pi^S_1(\Alb
X, 0)$ is an isomorphism.

\begin{Corollary} {\emph{(cf. \cite[Chapter III, Corollary
4.19]{Mi})}} We have
$$0\to \lim_{G\subset \Pic^0 X} \widehat{{G/G_{red}}}\times \Diag (\NS (X)_{tors})\to \pi^N_{ab}(X,x)\to
\pi^N_1(\Alb X, 0)\to 0$$ and
$$0\to \left(\lim_{G\subset \Pic^0 X} \widehat{{G/G_{red}}}\right)_{red}\times\Diag (\NS (X)_{tors})\to \pi^{\et}_{ab}(X,x)\to
\pi^{\et}_1(\Alb X, 0)\to 0.$$
\end{Corollary}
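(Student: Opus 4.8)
The plan is to deduce both sequences formally from Theorem~\ref{Albanese} by applying to $\pi^S_1(X,x)$ the two quotient operations that recover Nori's and the \'etale fundamental group schemes: the pro-finite completion $(-)^N$, with $(\pi^S_1)^N=\pi^N_1$, and the maximal pro-\'etale quotient $(-)^{\et}$, with $(\pi^S_1)^{\et}=\pi^{\et}_1$ (see Subsection~\ref{unipotent} and the preceding discussion).

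I would begin with the elementary lemma that abelianization commutes with each of these operations: for any affine $k$-group scheme $G$ one has $(G^N)_{ab}\simeq(G_{ab})^N$ and $(G^{\et})_{ab}\simeq(G_{ab})^{\et}$, since in every case both sides equal the inverse limit of the finite (respectively finite \'etale) abelian quotients of $G$. Because $\pi^S(\Alb X,0)$ is already abelian by Theorem~\ref{abelian-variety}, this identifies $\pi^N_{ab}(X,x)\simeq(\pi^S_{ab}(X,x))^N$ and $\pi^{\et}_{ab}(X,x)\simeq(\pi^S_{ab}(X,x))^{\et}$, and likewise for the Albanese terms, so that applying $(-)^N$ and $(-)^{\et}$ to the sequence of Theorem~\ref{Albanese} makes sense.

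Both functors are left adjoint to the inclusion of pro-finite (respectively pro-\'etale) group schemes, hence right exact; this gives at once the surjectivity of $\pi^N_{ab}(X,x)\to\pi^N_1(\Alb X,0)$ and of $\pi^{\et}_{ab}(X,x)\to\pi^{\et}_1(\Alb X,0)$ together with the identification of their cokernels. To pin down the kernels I would use the splitting of $\pi^S_{ab}(X,x)$ into its unipotent and diagonal parts furnished by Theorem~\ref{abelian-part}; the homomorphism to the Albanese part respects this splitting, so one may complete the unipotent and the diagonal sub-sequences separately. On the unipotent side $\pi^U$ is already pro-finite in positive characteristic (Nori), so $(-)^N$ leaves the kernel equal to $\lim_{G\subset\Pic^0X}\widehat{G/G_{red}}$, whereas $(-)^{\et}$ replaces each finite infinitesimal factor by its reduced (maximal \'etale) quotient and yields $(\lim_{G\subset\Pic^0X}\widehat{G/G_{red}})_{red}$; here I would invoke that over the perfect field $k$ the functor $H\mapsto H_{red}$ is exact on finite commutative group schemes, so that injectivity on the left survives. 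On the diagonal side the sub-sequence is Cartier dual to $0\to(\Pic^0X)_{red}\to(\Pic^{\tau}X)_{red}\to\NS(X)_{tors}\to 0$, and completing amounts to passing to torsion subgroups; since the group of points of an abelian variety is divisible, this stays short exact and contributes the factor $\Diag(\NS(X)_{tors})$.

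The crux, and the only place where right exactness is not enough, is left exactness: I must check that the pro-finite (respectively pro-\'etale) kernel genuinely injects into $\pi^N_{ab}(X,x)$ (respectively $\pi^{\et}_{ab}(X,x)$), with no derived-functor correction. The factorwise analysis above is exactly what secures this, because each factor of the kernel in Theorem~\ref{Albanese} is itself pro-finite and embeds compatibly with the completion. The subtlest point is the diagonal contribution in the \'etale case: over $k$ the multiplicative part of an \'etale group scheme detects only prime-to-$p$ torsion, so I would verify carefully that $\Diag(\NS(X)_{tors})$ enters here through its maximal \'etale quotient, in agreement with the cited analogue \cite[Chapter III, Corollary 4.19]{Mi}.
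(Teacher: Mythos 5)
Your proposal follows the same skeleton as the paper's proof: both deduce the two sequences by applying to the exact sequence of Theorem~\ref{Albanese} the completion functors $F$ (inverse limit over finite quotients, recovering $\pi^N_1$) and $F'$ (inverse limit over finite \'etale quotients, recovering $\pi^{\et}_1$). The difference is in how exactness is justified. The paper simply asserts that $F$ and $F'$ are exact ``as the directed systems that we consider satisfy the Mittag--Leffler condition'' and stops there; you instead prove right exactness by adjunction and then verify left exactness factorwise, using the splitting of $\pi^S_{ab}(X,x)$ into unipotent and diagonal parts from Theorem~\ref{abelian-part}, the exactness of $H\mapsto H_{red}$ on finite commutative group schemes over the perfect field $k$, and divisibility of $(\Pic^0 X)_{red}(k)$ on the diagonal side. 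Your extra care is not pedantry: the completion functors are \emph{not} left exact on all affine group schemes --- applying $F'$ (or $F$) to the Artin--Schreier sequence $0\to \ZZ/p\to \GG_a\to \GG_a\to 0$ kills $\GG_a$ but not $\ZZ/p$ --- so the paper's blanket exactness claim really depends on the special shape (pro-finite unipotent times diagonalizable) of the terms, which is exactly what your factorwise argument exploits. Your preliminary lemma that abelianization commutes with the completions (both sides being the inverse limit of finite, resp.\ finite \'etale, abelian quotients) is also implicitly needed by the paper, which passes from $F(\pi^S_{ab})$ to $\pi^N_{ab}$ without comment.

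The worry you flag at the end about the diagonal factor in the \'etale sequence is legitimate, and you should push it through rather than defer it: since $\mu_{p^n}$ is neither \'etale nor a subgroup scheme of any pro-\'etale group scheme (the coordinate ring of the latter is ind-\'etale, hence von Neumann regular, and admits no non-reduced quotient Hopf algebra), $F'(\Diag(\NS(X)_{tors}))$ is $\Diag$ of the prime-to-$p$ part of $\NS(X)_{tors}$ only. So when $\NS(X)_{tors}$ has $p$-torsion (e.g.\ a classical Enriques surface in characteristic $2$, where $\Diag(\ZZ/2)=\mu_2$), the second displayed sequence is literally correct only after replacing $\Diag(\NS(X)_{tors})$ by its maximal \'etale quotient --- which is precisely what your computation produces, and what matches the cited statement of Milne, where only prime-to-$p$ coefficients occur. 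This is a defect of the statement shared by the paper's own one-line proof (the replacement happens silently inside $F'$), not a flaw specific to your argument; for the first (Nori) sequence your proof is complete as written.
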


\begin{proof}
For a group scheme $H$ let us consider the directed system of
quotients $H\to G$, where $G$ is a finite group scheme (or an
\'etale finite group). Let us consider the functor $F$ ($F'$) from
the category of affine group schemes to the category of pro-finite
group schemes (pro-finite groups, respectively), which to $H$
associates the inverse limit of the above directed system. Note
that these functors are exact, as the directed systems that we
consider satisfy the Mittag--Leffler condition. Moreover, the
functor $F$ ($F'$) applied to $\pi ^S_1(X,x)$ gives $\pi^N_1(X,x)$
($\pi^{\et}_1(X,x)$, respectively). Therefore the required
assertions follow by applying the functors $F, F'$ to the sequence
from Theorem \ref{Albanese}.
\end{proof}

\section{Varieties with trivial \'etale fundamental group}

This section contains computation of the S-fundamental group
scheme for varieties with trivial \'etale fundamental group. It
contains a generalization of the main result of \cite{EM2} and it
is based on  the method of \cite{EM1}.

\medskip

Let $X$ be a smooth projective variety defined over an
algebraically closed field of characteristic $p>0$. Let $M_r$  be
the moduli space of rank $r$ slope stable bundles with numerically
trivial Chern classes. It is known that it is a quasi-projective
scheme.

A closed point $[E]\in M_r$ is called \emph{torsion} if there
exists a positive integer $n$ such that $(F_X^n)^*E\simeq E$.

\begin{Lemma}
Assume that $\pi^{\et} (X,x)=0$. If $E$ is a strongly stable
numerically flat vector bundle then its rank is equal to $1$ and
there exists $n$ such that $(F_X^n)^*E\simeq \O_X$.
\end{Lemma}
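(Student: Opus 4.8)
The plan is to exploit the action of Frobenius pullback on the moduli space $M_r$ and to combine finiteness of its orbits (the arithmetic input of \cite{EM1}) with the correspondence between Frobenius-periodic bundles and representations of $\pi^{\et}(X,x)$.

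First I would record that if $E$ is strongly stable and numerically flat of rank $r$, then every Frobenius pullback $(F_X^n)^*E$ is again stable (by strong stability) and numerically flat (numerical flatness is preserved by Frobenius pullback, e.g. via Theorem \ref{num-nef}, since strong semistability and triviality of the Chern classes are preserved). Hence each $(F_X^n)^*E$ defines a closed point $[(F_X^n)^*E]\in M_r$, and Frobenius pullback induces a self-map of this set of points sending $[V]$ to $[F_X^*V]$. By Theorem \ref{bound} the family $\{(F_X^n)^*E\}_{n\ge 0}$ is bounded and hence parametrized by the quasi-projective, in particular finite type, scheme $M_r$.

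The crucial step, and the main obstacle, is to show that the orbit $\{[(F_X^n)^*E]\}_{n\ge 0}\subset M_r$ is \emph{finite}. Boundedness alone does not suffice, since $M_r$ may have infinitely many $k$-points; here I would follow the arithmetic method of \cite{EM1}. Namely, I would spread out $X$, the polarization and $E$ over a finitely generated $\FF_p$-subalgebra $R\subset k$ and specialize at a closed point to obtain a variety and a bundle over a finite field $\FF_q$. The moduli space is then of finite type over $\FF_q$, so it has only finitely many points over each finite extension $\FF_{q^m}$, and the $q$-power Frobenius permutes these finite sets; this forces the Frobenius orbit of the specialized bundle to be finite. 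Transferring this finiteness back to $E$ over $k$, using that stable bundles are rigid so that a coincidence of moduli points of the specialization lifts to an isomorphism of the generic bundles, is exactly the delicate point carried out in \cite{EM1}, and I expect verifying this descent/specialization step carefully to be the hardest piece of the argument.

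Granting finiteness of the orbit, the pigeonhole principle yields integers $m\ge 0$ and $n\ge 1$ with $[(F_X^{m})^*E] = [(F_X^{m+n})^*E]$, and since both bundles are stable this gives an isomorphism $(F_X^n)^*E' \simeq E'$ for $E' := (F_X^{m})^*E$. Thus $E'$ is Frobenius-periodic, and by the Lange--Stuhler correspondence between bundles $V$ admitting an isomorphism $(F_X^n)^*V\simeq V$ and representations $\pi^{\et}(X,x)\to \GL_r(\FF_{p^n})$ (in the form used in \cite{EM1}), the vanishing $\pi^{\et}(X,x)=0$ forces this representation to be trivial, i.e. $E'\simeq \O_X^{\oplus r}$. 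But $E'$ is stable, so $r=1$ and $E'\simeq \O_X$. This shows simultaneously that $E$ has rank $1$ and that $(F_X^{m})^*E\simeq \O_X$, which is the required conclusion.
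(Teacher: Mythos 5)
There is a genuine gap, and it sits exactly where you placed your bets: the claimed finiteness of the Frobenius orbit $\{[(F_X^n)^*E]\}\subset M_r$ over a general algebraically closed field $k$. Your finiteness argument never uses the hypothesis $\pi^{\et}(X,x)=0$, so if it worked it would show that \emph{every} strongly stable numerically flat bundle on \emph{every} smooth projective variety in characteristic $p$ has finite Frobenius orbit. This is false: take a smooth projective curve $C$ over $k\supsetneq \overline{\FF}_p$ and a non-torsion line bundle $L\in \Pic^0(C)(k)$; then $F_C^*L\simeq L^{\otimes p}$, so the orbit $\{L^{\otimes p^n}\}_{n\ge 0}$ in $M_1=\Pic^0 C$ is infinite, even though every specialization of $L$ over a finite field is torsion and has finite orbit. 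This pinpoints why your transfer step fails: stable bundles are simple but not rigid (that is why $M_r$ can be positive-dimensional), distinct $k$-points of $M_r$ can specialize to the same closed point of an arithmetic model, and Frobenius-periodicity of the special fibre does not lift to the generic fibre. It is also a mischaracterization of \cite{EM1} to say this descent is "carried out" there --- Esnault and Mehta never prove orbit finiteness over general $k$; their arithmetic input is of a different nature.

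What the paper actually does is a dichotomy on the dimension of the Zariski closure $N$ of the orbit in $M_r$. If $\dim N=0$, the orbit is finite and your pigeonhole argument --- some $(F_X^n)^*E'\simeq E'$ is torsion, Lange--Stuhler \cite[Satz 1.4]{LS} gives a finite \'etale covering trivializing it, and $\pi^{\et}(X,x)=0$ forces triviality, whence $r=1$ --- coincides with the paper's first case and is correct. If $\dim N\ge 1$, the paper observes that the union $N'$ of positive-dimensional components of $N$ is Verschiebung divisible in the sense of \cite[Definition 3.6]{EM1} and runs the argument of \cite[Theorem 3.15]{EM1}, which rests on Hrushovski's theorem on the density of Frobenius-periodic points: torsion points are dense in $N'$, each corresponds by Lange--Stuhler and $\pi^{\et}=0$ to the trivial bundle, and a single point cannot be dense in a positive-dimensional variety --- contradiction. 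So the arithmetic ingredient you need is density of torsion points in an invariant subvariety, not finiteness of a single orbit; to repair your proof, replace the finiteness claim with this dichotomy and invoke the \cite{EM1} machinery in the positive-dimensional case.
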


\begin{proof}
By assumption all vector bundles $E_n=(F_X^n)^*E$ are stable. Let
$N$ be the Zariski closure of the set $\{[E_0], [E_1], \dots,  \}$
in $M_r$. If $N$ has dimension $0$ then some Frobenius pull back
$E'=(F_X^n)^*E$ is a torsion point of $M=M_r$. In this case by the
Lange--Stuhler theorem (see \cite[Satz 1.4]{LS}) there exists a finite \'etale covering
$f:Y\to X$ such that $f^*E'$ is trivial. But by assumption there
are no non-trivial \'etale coverings so $E'$ is trivial.

Therefore we can assume that $N$ has dimension at least $1$. Note
that the set $N'$ of irreducible components of $N$ of dimension
$\ge 1$ is Verschiebung divisible (see \cite[Definition
3.6]{EM1}), since $V|_N$ is defined at points $E_n$ for $n\ge 1$.
Now we can proceed exactly as in proof of \cite[Theorem 3.15]{EM1}
to conclude that the trivial bundle is dense in $N'$, a
contradiction.
\end{proof}

\begin{Proposition}
Assume that $\pi^{\et} (X,x)=0$. Let $E$ be a rank $r$ numerically
flat vector bundle on $X$. Then there exists some integer $n\ge 0$
such that $(F_X^n)^*E\simeq \O_X^r$.
\end{Proposition}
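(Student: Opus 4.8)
The plan is to argue by strong induction on the rank $r$ of $E$, with the preceding lemma providing both the base case and the mechanism of the inductive step. First I would dispose of $r=1$: here $E$ is a numerically trivial line bundle, hence slope stable, and every Frobenius pull-back $(F_X^n)^*E$ is again a line bundle and so stable; thus $E$ is strongly stable and the preceding lemma yields $(F_X^n)^*E\simeq \O_X$ for some $n$. The reason one must pass through Frobenius pull-backs in general is that a slope stable numerically flat bundle need not be \emph{strongly} stable, so the lemma cannot be applied to an irreducible $E$ of higher rank directly; instead I would use its contrapositive, namely that a numerically flat bundle of rank $\ge 2$ is never strongly stable.

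For the inductive step assume $\rk E=r\ge2$. Since $E$ is not strongly stable, there is an integer $n_0\ge 0$ such that $(F_X^{n_0})^*E$ is not slope stable. As this bundle is still numerically flat, it is not irreducible in the abelian category $\Vect (X)$ (for numerically flat sheaves irreducibility is equivalent to slope stability), so it fits into a short exact sequence $0\to A\to (F_X^{n_0})^*E\to B\to 0$ with $A$ and $B$ numerically flat of ranks $a,b$ satisfying $0<a,b<r$. Applying the inductive hypothesis to $A$ and $B$, and choosing $m$ large enough that both $(F_X^{m})^*A\simeq \O_X^{a}$ and $(F_X^{m})^*B\simeq \O_X^{b}$, I would pull the sequence back by $F_X^{m}$ (which is exact, $F_X$ being flat on the smooth variety $X$) to obtain a short exact sequence $0\to \O_X^{a}\to G\to \O_X^{b}\to 0$ with $G=(F_X^{m+n_0})^*E$; in particular $G$ lies in $\N\Vect (X)$.

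It then remains to split this extension after a further pull-back, i.e. to show $(F_X^N)^*G\simeq \O_X^{r}$ for some $N$. Its class lies in $\Ext^1(\O_X^{b},\O_X^{a})\simeq H^1(X,\O_X)^{\oplus ab}$, on which $F_X^*$ acts through the $p$-linear Frobenius endomorphism of $H^1(X,\O_X)$, and the crux is to prove that this endomorphism is nilpotent. For this I would invoke the Artin--Schreier sequence $0\to \ZZ/p\ZZ\to \O_X\to \O_X\to 0$, whose last map is $f\mapsto f^p-f$, and which identifies $H^1_{\et}(X,\ZZ/p\ZZ)$ with the space of Frobenius-fixed vectors $\ker(F-1\mid H^1(X,\O_X))$; the hypothesis $\pi^{\et}(X,x)=0$ forces $H^1_{\et}(X,\ZZ/p\ZZ)=0$. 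Since a Frobenius-linear operator on a finite dimensional space over the algebraically closed field $k$ splits into a bijective (unit-root) part, spanned over $k$ by its fixed vectors, and a nilpotent part, the vanishing of the fixed space makes the operator nilpotent. Hence $(F_X^N)^*$ kills the extension class for $N$ large, giving $(F_X^N)^*G\simeq \O_X^{a}\oplus \O_X^{b}=\O_X^{r}$ and finally $(F_X^{N+m+n_0})^*E\simeq \O_X^{r}$.

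The hard part will be the last paragraph: the inductive reduction to an iterated extension of trivial bundles is routine bookkeeping, but the substantive input is the passage from the geometric hypothesis $\pi^{\et}(X,x)=0$ to the nilpotence of Frobenius on $H^1(X,\O_X)$. This is precisely where smoothness and properness of $X$ (finiteness of $H^1$), the triviality of the \'etale fundamental group, and the structure theory of $p$-linear operators over a perfect field all enter.
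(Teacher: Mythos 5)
Your argument is correct, and its decisive step is identical to the paper's: both proofs reduce, after sufficiently many Frobenius pull-backs, to splitting an extension of trivial bundles, and both derive the needed nilpotence of $F^*$ on $H^1(X,\O_X)$ from $\pi_1^{\et}(X,x)=0$ --- the paper compresses your Artin--Schreier/Fitting discussion into the single equality $H^1(X,\O_X)_s=\Hom(\pi_1^{\et}(X,x),\ZZ/p)\otimes_{\FF_p}k=0$, which unwinds to exactly the computation you spell out. Where you genuinely diverge is the reduction to that extension. The paper quotes \cite[Theorem 4.1]{La}: some Frobenius pull-back of $E$ admits a Jordan--H\"older filtration $0=E_0\subset E_1\subset\dots\subset E_m$ whose quotients are strongly stable numerically flat bundles; the Lemma then trivializes each graded piece, the inductive hypothesis trivializes $E_m/E_1$, and one is left with an extension of $\O_X^{r-1}$ by $\O_X\simeq E_1$. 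You avoid this citation: from the Lemma's contrapositive (a numerically flat bundle of rank $\ge 2$ is never strongly stable) and the equivalence, recorded in the paper, between irreducibility in $\Vect(X)$ and slope stability, you extract a single short exact sequence $0\to A\to (F_X^{n_0})^*E\to B\to 0$ in $\Vect(X)$ (the quotient stays numerically flat because $\Vect(X)$ is an abelian subcategory with sheaf-theoretic kernels and cokernels) and run strong induction on both pieces, ending with an extension of $\O_X^b$ by $\O_X^a$. The trade-off is mild: the paper's route produces the full filtration in one stroke at the cost of invoking a nontrivial external theorem, while yours is more self-contained at this step, using only facts already stated in the paper; both then rest on the same $p$-linear algebra. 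Your bookkeeping is also sound --- in particular you correctly chose a single $m$ trivializing both $A$ and $B$, which works because a trivial bundle remains trivial under further Frobenius pull-back.
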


\begin{proof}
Proof is by induction on the rank $r$ of $E$. When $r=1$ then the
assertion follows from the above lemma.

Let us recall that there exists an integer $n$ such that
$(F_X^n)^*E$ has a Jordan H\"older filtration $E_0=0\subset
E_1\subset \dots \subset E_m=(F_X^n)^*E$ in which all quotients
$E^i=E_i/E_{i-1}$ are strongly stable numerically flat vector
bundles (see \cite[Theorem 4.1]{La}). By taking further Frobenius
pull backs and using the above lemma we can also assume that
$E^i\simeq \O_X$.

By our induction assumption taking further Frobenius pull backs we
can also assume that $E_m/E_1\simeq \O_X^{r-1}$. Now we need to
show that there exists some integer $s$ such that the extension
$$0\to (F^s_X)^*E_1\to (F^s_X)^*E\to (F^s_X)^*(E_m/E_1)\to 0$$
splits. To prove that it is sufficient to note that the
endomorphism $F^*: H^1(X,\O_X)\to H^1(X,\O_X)$ is nilpotent. But
we know that $F^*$ induces the Fitting decomposition $H^1(X,
\O_X)= H^1(X, \O_X)_{s} \oplus H^1(X, \O_X)_{n}$ into stable and
nilpotent parts and the assertion follows from equality
$$H^1(X, \O_X)_{s}=\Hom (\pi_1^{\et}(X,x), \ZZ/p)\otimes _{\FF _p}k=0.$$
\end{proof}

\begin{Corollary}
If $\pi^{\et} (X,x)=0$ then $\pi^{S} (X,x)\simeq \pi^{N} (X,x)$.
\end{Corollary}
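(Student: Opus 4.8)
The plan is to prove the equality of the two defining Tannakian categories rather than to manipulate the group schemes directly. Recall that $\C^N(X)$, the category of bundles trivializable over a torsor under a finite group scheme, is a full subcategory of $\Vect(X)$ that is closed under subobjects; hence the inclusion induces a faithfully flat homomorphism $\pi^S(X,x)\to\pi^N(X,x)$ by \cite[Proposition 2.21]{DM}, which is exactly the pro-finite completion map discussed earlier. It will therefore suffice to establish the reverse inclusion $\Vect(X)\subseteq\C^N(X)$: once every numerically flat bundle is known to be essentially finite, the two categories coincide and the canonical homomorphism is an isomorphism.

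So fix a numerically flat bundle $E$ of rank $r$. By the preceding Proposition there is an integer $n\ge 0$ with $(F_X^n)^*E\simeq\O_X^r$. First I would pass to the Tannakian subcategory $\langle E\rangle\subseteq\Vect(X)$ generated by $E$; since it is generated by a single object, its Tannaka dual $G_E$ is an affine $k$-group scheme of finite type and a quotient of $\pi^S(X,x)$. The key step is to reinterpret the relation $(F_X^n)^*E\simeq\O_X^r$ on the side of $G_E$: under the Tannakian equivalence the $n$-fold Frobenius pullback of bundles corresponds to precomposing representations with the $n$-fold relative Frobenius $F^n_{G_E/k}\colon G_E\to G_E^{(p^n)}$, and the triviality of $(F_X^n)^*E$ together with the faithfulness of the defining representation of $G_E$ forces $F^n_{G_E/k}$ to be the trivial homomorphism.

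Granting this, $G_E$ equals its own $n$-th Frobenius kernel, so it is an infinitesimal group scheme of height at most $n$; being of finite type it is then a finite local group scheme. Consequently $\langle E\rangle$ is a finite Tannakian category, $E$ is essentially finite, and $E\in\C^N(X)$. As $E$ was arbitrary we obtain $\Vect(X)=\C^N(X)$, whence $\pi^S(X,x)\simeq\pi^N(X,x)$.

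The hard part will be the bookkeeping in the second paragraph. The absolute Frobenius $F_X$ is only $p$-linear, so to compare it with a genuine $k$-homomorphism of group schemes one must factor $F_X^n$ through the relative Frobenius and the Frobenius twist $X^{(p^n)}$, and then track how the twist interacts with the fiber functor at $x$. I expect the only real content is verifying that $(F_X^n)^*E\simeq\O_X^r$ yields the vanishing of $F^n_{G_E/k}$ itself, and not merely some weaker statement; once that identification is secured, the passage from \emph{height at most $n$ and of finite type} to \emph{finite} is standard.
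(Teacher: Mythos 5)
Your proposal is correct and follows the route the paper intends: the Corollary is stated as an immediate consequence of the preceding Proposition, the implicit step being exactly your claim that an $F$-trivial bundle ($(F_X^n)^*E\simeq\O_X^r$) is essentially finite, which you justify by the standard Tannakian argument that the monodromy group $G_E$ is algebraic and killed by $F^n_{G_E/k}$, hence finite local. The bookkeeping you worry about in your last paragraph is handled by naturality of the relative Frobenius: from $F^n_{\GL(E(x))/k}\circ\rho=\rho^{(p^n)}\circ F^n_{G_E/k}$ one sees directly that $G_E$ lands in the $n$-th Frobenius kernel of $\GL(E(x))$, which is already finite.
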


As a special case we get \cite[Theorem 1.2]{EM2}: if $\pi^{N}
(X,x)=0$ then $\pi^{S}(X,x)=0$.

\bigskip

{\bf Acknowledgements.}

The author was partially supported by a Polish MNiSW grant
(contract number NN201265333). The author would like to thank the
referee for the remarks that helped to improve the paper.

\end{document}